\DeclareMathOperator*{\argmin}{\mathrm{argmin}}
\begin{document}

\title{Fast Optimization Algorithm on Riemannian Manifolds and Its Application in Low-Rank Representation
}


\author{Haoran Chen         \and
        Yanfeng Sun \and 
        Junbin~Gao  \and
        Yongli~Hu
}


\institute{Haoran Chen, Yanfeng Sun, Yongli Hu. \at
              Beijing Key Laboratory of Multimedia and Intelligent Technology, College of Metropolitan Transportation, Beijing University of Technology, Beijing 100124, P. R. China \\
              Tel.: +86-130-5118-1821\\
              \email{hr\_chen@emails.bjut.edu.cn, yfsun@bjut.edu.cn, huyongli@bjut.edu.cn}           
           \and
           Junbin Gao \at
              School of Computing and Mathematics, Charles Sturt University, Bathurst, NSW 2795, Australia.\\
              \email{jbgao@csu.edu.au}
}
\date{Received: date / Accepted: date}
\maketitle
\begin{abstract}
The paper addresses the problem of optimizing a class of composite functions on Riemannian manifolds and a new first order optimization  algorithm (FOA) with a fast convergence rate is proposed. Through the theoretical analysis for FOA, it has been  proved that the algorithm has quadratic convergence. The experiments in the matrix completion task show that FOA has better performance than other first order optimization methods on Riemannian manifolds. A fast subspace pursuit method based on FOA is proposed to solve the low-rank representation model based on augmented Lagrange method on the low rank matrix variety. Experimental results on synthetic and real data sets are presented to demonstrate that both FOA and SP-RPRG(ALM) can achieve superior performance in terms of faster convergence and higher accuracy.
\keywords{Fast optimization algorithm \and Riemannian manifold \and Proximal Riemannian gradient \and Subspace pursuit \and Low rank matrix variety \and Low rank representation \and Augmented Lagrange method \and clustering}
\end{abstract}

\section{Introduction}\label{intro}
Recent years have witnessed the growing attention in optimization on Riemannian manifolds. Since Riemannian optimization is directly based on the curved manifolds, one can eliminate those constraints such as orthogonality to obtain an unconstrained optimization problem that, by construction, will only use feasible points. This allows one to incorporate Riemannian geometry in the resulting optimization problems, thus producing far more accurate numerical results. The Riemannian optimization have been successfully applied in machine learning, computer vision and data mining tasks, including fixed low rank optimization \cite{BoumalAbsil2011, Vandereycken2013}, Riemannian dictionary learning \cite{HarandiHartleyShenLovellSanderson2014}, and computer vision  \cite{SuSrivastavaSouzaSarkar2014,WangShanChenGao2008, TuragaVeeraraghavanSrivastavaChellappa2011}, and tensor clustering \cite{SunGaoHongMishraYin}.

The constrained optimization on Euclidean space often has much larger dimension than that on manifolds implicitly defined by the constraints. An optimization algorithm on a manifold has therefore a lower complexity and better numerical properties. Methods of solving minimization problems on Riemannian manifolds have been extensively researched \cite{Udriste1994,AbsilMahonySepulchre2008}.  In fact, traditional optimization methods such as the steepest gradient method, conjugate gradient method and Newton method in Euclidean space can be easily adopted to optimization on Riemannian manifolds and a solid conceptual framework was built over the last decades  and the generic algorithm implementation is openly available, see \url{http://www.manopt.org}.

As one of fundamental optimization algorithms, the steepest descent method was first proposed for optimization on Riemmanian manifolds in \cite{Gabay1982}. The algorithm is computationally simple, however its convergence is very slow, particularly, for large scale complicated optimization problems in modern machine learning. In contrast, the Newton's method \cite{Gabay1982} and the BFGS quasi-Newton scheme (BFGS rank-2-update) \cite{RingWirth2012}  have higher convergence rate, however, in practical applications, using the full second-order Hessian information is computationally prohibitive.

In order to obtain a method offering high convergence rate and avoiding calculating the inverse Hessian matrix, Absil \emph{et al.} \cite{AbsilBakerGallivan2007} proposed the Trust-region method on Riemannian manifolds.
For example, the Trust-region method has been applied the optimization problem on Grassmann manifold for the matrix completion problem \cite{BoumalAbsil2011}. Each iteration of the trust-region method involves solving  the Riemannian Newton equation \cite{Smith1993}, which increases the complexity of the algorithm. Huang \emph{et al.} \cite{HuangAbsilGallivan2015}  generalized symmetric rank-one trust-region method to a vector variable optimization problem on a $d$-dimensional Riemannian manifold, where an approximated Hessian matrix was generated by using the symmetric rank-one update without solving Riemannian Newton equation. This method has superlinear convergence. However, the symmetric rank-one update matrices are generated only by vectors  rather than matrices, so this algorithm is not feasible for the variable is the $m\times n$ matrix.  This is a great barrier in practical applications.

Generally, optimization methods which use second-order function information can get faster convergent speed than just using first-order information, however, this will greatly increase computational complexity at the same time. Fletcher-Reeves conjugate gradient method on Riemannian manifolds \cite{RingWirth2012} is a typical type of methods which need only the first-order function information, and its convergence is superlinear but lower than the desired 2-order speed.

For the gradient method, it is known that the sequence of function values $F(\mathbf X_k)$ converges to the optimal function value $F_*$ at a rate of convergence that is no worse than $\mathcal{O}(1/k)$, which is also called a superlinear convergence. For optimization on Euclidean spaces, researchers have sought for acceleration strategies to speed up the optimization algorithms requiring only first-order function information, \emph{e.g.}  based on linear search  \cite{Nesterov1983}. More strategies have been proposed by considering special structures of objectives, such as composite objective functions \cite{BeckTeboulle2009,TohYun2010,Nesterov2013}. In these strategies, the algorithms need only the first-order function information with a guaranteed quadratic convergence rate for $F(\mathbf X_k)$. In this paper we extend the method to Algorithm \ref{Algorithm1} for the general model \eqref{P} on Riemannina manifolds and we establish the improved complexity and convergence results, see Theorem \ref{thm 1} in Section \ref{Sec:III}.

As a practical application of the proposed fast gradient-like first-order algorithm, we further consider the learning problems on low-rank manifolds. Low-rank learning seeks the lowest rank representation among all the candidates that can contain most of the data samples information. The low-rank learning has been widely applied in  the low-rank matrix recovery (MR)/low-rank matrix completion \cite{MishraApuroopSepulchre2012,MishraMeyerBachSepulchre2013,MishraMeyerBonnabelSepulchre2014} and low-rank representation (LRR) for subspace clustering \cite{LiuLinYu2010,LinLiuSu2011, LiuLinYanSunYuMa2013}.  One of low-rank matrix completion application examples is the Netflix problem \cite{BennettElkanLiuSmythTikk2007}, in which one would like to recover a low-rank matrix from a sparse sampled matrix entries.

However, the lowest rank optimization  problems are NP hard and generally extremely hard to solve (and also hard to approximate \cite{RechtFazelParrilo2010}). Typical approaches are either to transform the lowest rank problem into minimization of nuclear norm by using effective convex relaxation or parametrize the fixed-rank matrix variables based on matrix decomposition.

As the set of rank-$r$ matrix $\mathcal{M}_r$ is a smooth Riemannian manifold under an appropriately chosen metric \cite{AbsilMahonySepulchre2008,MishraApuroopSepulchre2012,Vandereycken2013, MishraMeyerBonnabelSepulchre2014}, one can convert a rank constrained optimization problem to an unconstrained optimization problem on the low rank matrix manifold.  Vandereycken\cite{Vandereycken2013} considered this problem as an optimization problem on fixed-rank matrix Riemannian manifold. Considering different types of matrix decomposition, Mishra \emph{et al.} \cite{MishraApuroopSepulchre2012,MishraMeyerBonnabelSepulchre2014} proposed a Riemannian quotient manifold for low-rank matrix completion. Ngo \emph{et al.} \cite{NgoSaad2012} addressed this problem based on a scaled metric on the Grassmann manifold.

The aforementioned works transform/simplify a low-rank matrix completion problem into the fixed-rank optimization. However, low-rank optimization of matrix completion is not equivalent to the fixed-rank optimization. In order to exploit the fixed-rank optimization for low-rank optimization, Mishra \emph{et al.} \cite{MishraMeyerBachSepulchre2013} proposed a pursuit algorithm that alternates between fixed-rank optimization and rank-one updates to search the best rank value. Tan \emph{et al.} \cite{TanTsangWangVandereyckenPan2014}  proposed a Riemannian pursuit approach which converts low-rank problem into a series of fixed rank problems, and further confirmed that low-rank
problem can be considered as optimization whose search space is varieties of low-rank matrices $\mathcal{M}_{\leq r}$ \cite{TanShiGaoHengelXuXiao2015}, see \eqref{variety}, using the subspace pursuit approach on Riemannian manifold to look for the desired rank value $r$. Since $\mathcal{M}_{\leq r}$ is a closure of the Riemannian submanifold $\mathcal{M}_r$, see \eqref{lrrm}, an iteration algorithm over $\mathcal{M}_{\leq r}$ is  more safe-guarded than on the fixed-rank open manifold $\mathcal{M}_r$. We abuse "Riemannian" for simplicity. In this paper, we consider low-rank learning as optimization problems on varieties of low-rank matrix Riemannian manifolds $\mathcal{M}_{\leq r}$.

 The contributions of this paper are:
 \begin{enumerate}
   \item We develop a fast optimization algorithm (FOA) for the model \eqref{P} on general Riemannian manifolds. This algorithm only uses first-order function information, but has the convergence rate $\mathcal{O}(1/k^2)$.
   \item We provide a theoretical analysis for the proposed FOA and prove its quadratic convergence.
   \item We propose fast subspace pursuit approach (SP-RPRG(ALM)) on low rank matrix varieties based on the augmented Lagrangian method for the model \eqref{LRR}.
\end{enumerate}
The rest of this paper is organized as follows. Section \ref{Sec:II} introduces the necessary symbol and concepts. In section \ref{Sec:III}, we propose the fast optimization algorithm on Riemannian manifold, and discuss its convergence in detail. In section \ref{Sec:IV}, we extend the previous algorithm to the low-rank representation with noise on low-rank matrices varieties. Moreover, we proposed the fast subspace pursuit method (SP-RPRG(ALM)) based on augmented Lagrange approach and FOA on low rank matrices varieties and applied SP-RPRG(ALM) to this model. Some experimental results are presented for evaluating the performance of our algorithm in Section \ref{Sec:V}. Finally, the conclusions are summarized in Section \ref{Sec:VI}.
\section{Notations and Preliminaries}\label{Sec:II}
This section will briefly describe some notations and concepts that will be used throughout this paper.

\subsection{Notations}\label{Notation}
We denote matrices by boldface capital letters, \emph{e.g.}, $\mathbf{A}$, vectors by boldface lowercase letters, \emph{e.g.}, $\mathbf{a}$, and scalars by letters, \emph{e.g.}, $a$. The $\ell_{p}$-norm of a vector $\mathbf{v}$ is denoted by $\|\mathbf{v}\|_p$.  $\|\mathbf{v}\|_0$ is defined as the number of non-zero elements in the vector $\textbf{v}$. $\|\mathbf A\|_{21}$ denotes the sum of all  $\ell_2$-norm of its columns. The operator $\max\{\textbf{u},\textbf{v}\}$ means the vector resulting from the component-wise maximum of $\textbf{u}$ and $\textbf{v}$. The inner product of two matrices $\textbf{A}$ and $\textbf{B}$ at the same size is defined as $\langle \textbf{A},\textbf{B} \rangle = \text{tr}(\textbf{A}^{\textrm{T}}\textbf{B})$. The superscript \textrm{T} denotes the transpose of a vector/matrix.  The $\text{diag}(\boldsymbol{\sigma})$ is the diagonal matrix whose diagonal elements are the components of vector $\boldsymbol{\sigma}$. The SVD decomposition of $\textbf{X} \in \mathbb{R}^{m\times n}$ is denoted as $\textbf{X} = \textbf{U}\text{diag}(\boldsymbol{\sigma})\textbf{V}^{\textrm{T}}$,  where $\sigma_i$ is the $i$-th singular value of \textbf{X}. The nuclear norm of $\textbf{X}$ is defined as $\|\textbf{X}\|_* = \sum_i\sigma_i$. For any convex function $F(\textbf{X})$, its subdifferential at $\textbf{X}$ is denoted by $\partial F(\textbf{X})$.

\subsection{Riemannian Manifolds}\label{RM}
A manifold $\mathcal{M}$ of dimension $m$ \cite{Lee2003} is a topological space that locally resembles a Euclidean space $\mathbb{R}^m$  in a neighbourhood of each point $\mathbf X \in \mathcal{M}$. For example, lines and circles are $1D$ manifolds, and surfaces, such as a plane, a sphere, and a torus, are $2D$ manifolds. Geometrically, a tangent vector is a vector that is tangent to a manifold at a given point $\mathbf X$. Abstractly, a tangent vector is a functional, defined on the set of all the smooth functions over the manifold $\mathcal{M}$, which satisfies the Leibniz differential rule. All the possible tangent vectors at $\mathbf X$ constitute a Euclidean space, named the \textit{tangent space} of $\mathcal{M}$ at $\mathbf X$ and denoted by $T_{\mathbf X}\mathcal{M}$. If we have a smoothly defined metric (inner-product) across all the tangent spaces $\langle\cdot,\cdot\rangle_{\mathbf X}: T_{\mathbf X}\mathcal{M}\times T_{\mathbf X}\mathcal{M} \rightarrow \mathbb{R}$  on every point $\mathbf X \in \mathcal{M}$, then we call $\mathcal{M}$ a \textit{Riemannian manifold}.

The tangent bundle is defined as the disjoint union of all tangent spaces
\begin{equation}
T\mathcal{M} := \bigcup_{\textbf{X}\in\mathcal{M}} \{ \textbf{X}\}\times T_{\textbf{X}}\mathcal{M}.
\end{equation}
For a Riemannian manifold, the Riemannian gradient of a smooth function $f:\mathcal{M} \rightarrow\mathbb{R}$ at $\textbf{X}\in\mathcal{M}$ is defined as the unique tangent vector $\text{grad}f(\textbf{X})$ in $T_\textbf{X} \mathcal{M}$ such that $\langle \text{grad}f(\textbf{X}),\boldsymbol{\xi}\rangle_{\mathbf X} = \text{D}f(\text{\textbf{X}})[\boldsymbol{\xi}]$ for all $\boldsymbol{\xi} \in T_{\textbf{X}}\mathcal{M}$,
where $\text{D}f(\text{\textbf{X}})$ denotes the (Euclidean) directional derivative. If $\mathcal{M}$ is embedded in a Euclidean space, then $\text{grad}f(\textbf{X})$ is given by the projection of $\text{D}f(\mathbf{X})$ onto the tangent space $T_{\mathbf X}\mathcal{M}$.

With a globally defined differential structure, manifold $\mathcal{M}$ becomes a differentiable manifold.  A \textit{geodesic} $\gamma :[0,1]\rightarrow \mathcal{M}$ is a smooth curve with a vanishing covariant derivative of its tangent vector field, and in particular, the Riemmannian distance     between two points $\mathbf X_i, \mathbf X_j \in \mathcal{M}$ is  the shortest smooth path connecting them on the manifold, that is the infimum of the lengths of all paths joining $\mathbf X_i$ and $\mathbf X_j$.

There are predominantly two operations for computations on a Riemannian manifold, namely (1) the exponential map at point $\mathbf X$, denoted by $\exp_{\mathbf X}: T_{\mathbf X}\mathcal{M} \rightarrow \mathcal{M}$,  and (2) the logarithmic map, at point $\mathbf X$, $\log_{\mathbf X}: \mathcal{M}\rightarrow T_{\mathbf X}\mathcal{M} $. The former projects a tangent vector in the tangent space onto the manifold, the latter does the reverse. Locally both mappings are diffeomorphic. Note that these maps depend on the manifold point $\mathbf X$ at which the tangent spaces are computed.

Given two points $\mathbf X_i, \mathbf X_j \in \mathcal{M}$ that are close to each other,  the distance between them can be calculated through the following formula as the norm in tangent space.
\begin{equation}
\text{dist}_{\mathcal{M}}(\mathbf X_i, \mathbf X_j)=\|\log_{\mathbf X_i}(\mathbf X_j)\|_{\mathbf X_i}
\end{equation}
The squared distance function $\text{dist}^2_{\mathcal{M}}(\mathbf X,\cdot)$ is a  smooth function for all $\mathbf X \in \mathcal{M}$. In this setting, it is tempting to define a retraction and lifting along the following lines. Given $\mathbf X \in \mathcal M$ and $\boldsymbol{\xi}\in T_{\mathbf{X}}\mathcal M$, compute \emph{Retraction operator} $R_{\mathbf X}(\boldsymbol{\xi})$ by (1) moving along $\boldsymbol{\xi}$ to get the point $\mathbf X+\boldsymbol{\xi}$ in the linear embedding space, and (2) ¡°projecting¡± the point $\mathbf X+\boldsymbol{\xi}$ back to the manifold $\mathcal{M}$. And we compute \emph{Lifting operator} $L_{\mathbf{X}_i}(\mathbf{X}_j)$ by projecting the point $\mathbf{X}_j$ onto the tangent space  $T_{\mathbf{X}_i}\mathcal{M}$. From another point of view, since $L_{\mathbf{X}_i}(\mathbf{X}_i) =\mathbf 0$, so $L_{\mathbf{X}_i}(\mathbf{X}_j)$  could be perceived as a direction denoting from  point $\mathbf X_i$ to $\mathbf X_j$ on Riemannian manifold.

At last, we need to define vector transport on $\mathcal{M}$.
\begin{equation}\label{transport}
\begin{split}
  \mathcal{T}_{\mathbf{X}\rightarrow \mathbf{Y}}: T_{\mathbf{X}}\mathcal{M} &\rightarrow T_{\mathbf{Y}}\mathcal{M},\\
 \boldsymbol{\xi} & \rightarrow P_{T_{\mathbf{Y}}\mathcal{M}}(\boldsymbol{\xi}).
\end{split}
\end{equation}

\subsection{The Riemannian Manifold of Fixed-rank Matrices}

Let
\begin{equation}\label{lrrm}
    \mathcal{M}_r = \{\mathbf{X} \in \mathbb{R}^{m\times n}: \text{rank}(\mathbf{X})=r\}.
\end{equation}
It can be proved that  $\mathcal{M}_r$ is a Riemannian manifold  of dimension $m+n-r$ \cite{Vandereycken2013}. It has the following  equivalent expression by SVD decomposition:
\begin{equation*}
\mathcal{M}_r = \{\mathbf{U}\text{diag}(\boldsymbol{\sigma}) \mathbf{V}^{\textrm{T}}:\mathbf{U}\in \text{St}_r^m, \mathbf{V}\in \text{St}_r^n, \|\boldsymbol{\sigma}\|_0 = r\}
\end{equation*}
where $\text{St}_r^m$ is the Stiefel manifold of $m\times r $  real, orthonormal matrices, and the entries in $\boldsymbol{\sigma}$ are arranged in descending order. Furthermore, the tangent space $T_{\mathbf{X}}\mathcal{M}_r$ at $\mathbf{X} = \mathbf{U}\text{diag}(\boldsymbol{\sigma})\mathbf{V}^{\textrm{T}} \in \mathcal{M}_r$ is given by
\begin{equation}
\begin{split}
 T_{\mathbf{X}}\mathcal{M}_r =  &\big\{ \mathbf{U}\mathbf{M}\mathbf{V}^{\textrm{T}}+\mathbf{U}\mathbf{V}_p^{\mathbf{T}}+\mathbf{U}_p\mathbf{V}^{\textrm{T}}:\mathbf{M}\in \mathbb{R}^{r\times r}, \\
 & \mathbf{U}_p\in \mathbb{R}^{m\times r}, \mathbf{V}_p\in \mathbb{R}^{n\times r}, \mathbf{U}_p^{\textrm{T}}\mathbf{U} =0, \mathbf{V}_p^{\textrm{T}}\mathbf{V} =0\big\}.
\end{split}
\end{equation}

 Since $\mathcal{M}_r$ is embedded in $\mathbb{R}^{m\times n}$, the Riemannian gradient of $f$ is given as the orthogonal projection of the Euclidean gradient of $f$ onto the tangent space. Here, we denote the orthogonal projection of any $\mathbf{Z} \in \mathbb{R}^{m\times n}$ onto the tangent space at $\mathbf{X} = \mathbf{U}\text{diag}(\boldsymbol{\boldsymbol{\sigma}})\mathbf{V}^{\textrm{T}} $ as
\begin{equation}\label{gradm}
\begin{split}
\textrm{P}_{T_{\mathbf{X}}\mathcal{M}_r}:\ \ &\mathbb{R}^{m\times n}\rightarrow T_{\mathbf{X}}\mathcal{M}_r\\
  & \mathbf{Z}\mapsto \textrm{P}_{\mathbf{U}}\mathbf{Z}\textrm{P}_{\mathbf{V}}+\textrm{P}_{\mathbf{U}}^{\perp}\mathbf{Z}\mathrm{P}_{\mathbf{V}}+        \textrm{P}_{\mathbf{U}}\mathbf{Z}\textrm{P}_{\mathbf{V}}^{\perp},
\end{split}
\end{equation}
where $\textrm{P}_{\mathbf{U}}:= \mathbf{U}\mathbf{U}^{\textrm{T}}, \textrm{P}_{\mathbf{U}}^{\perp}:= \mathbf{I}-\textrm{P}_{\mathbf{U}}, \textrm{P}_{\mathbf{V}}:= \mathbf{V}\mathbf{V}^{\textrm{T}}$ and $\textrm{P}_{\mathbf{V}}^{\perp}:= \mathbf{I}-\textrm{P}_{\mathbf{V}}$, for any $\mathbf{U}\in \textrm{St}_r^m,\mathbf{V}\in \textrm{St}_r^n$.

\subsection{Varieties of Low-rank Matrices}\label{RV}
Define 
\begin{equation}\label{variety}
\mathcal{M}_{\leq r} = \{\mathbf{X}\in \mathbf{R}^{m\times n}: \text{rank}(\mathbf{X}) \leq r\},
\end{equation}
then $\mathcal{M}_{\leq r}$ is the closure of the matrices set with  constant rank $\mathcal{M}_r$. At a singular point $\mathbf{X}$ where rank($\mathbf X$)$=s<r$, we have to use search directions in the tangent cone (instead of tangent space), The tangent cones of $\mathcal{M}_{\leq r}$ are explicitly known \cite{SchneiderUschmajew2015},
\begin{equation}
T_{\mathbf{X}}\mathcal{M}_{\leq r} = T_{\mathbf{X}}\mathcal{M}_s\oplus\{\Xi_{r-s}\in \mathcal{U}^{\bot}\otimes\mathcal{V}^{\bot}\},
\end{equation}
where $\mathcal{U} = \text{rang}(\mathbf{X})$ and $\mathcal{V} = \text{rang}(\mathbf{X}^{\textrm{T}})$, $\mathbf{X} = \mathbf{U}\Sigma \mathbf{V}^{\textrm{T}}$, $ \text{rank}(\mathbf{X})= s<r$.

Projection operator $\textrm{P}_{T_{\mathbf{X}}\mathcal{M}_{\leq r}}(\cdot)$, projecting an element $\boldsymbol{\eta}$ onto the tangent cone $T_{\mathbf{X}}\mathcal{M}_{\leq r}$, can be calculated as
 \begin{equation}\label{projection}
 \textrm{P}_{T_{\mathbf{X}}\mathcal{M}_{\leq r}}(\boldsymbol{\eta}) =\textrm{P}_{T_{\mathbf{X}}\mathcal{M}_{s}}(\boldsymbol{\eta})+\mathbf{\Xi}_{r-s},
\end{equation}
where
\begin{equation*}
\begin{split}
 &\textrm{P}_{T_{\mathbf{X}}\mathcal{M}_{s}}(\boldsymbol{\eta})=\textrm{P}_{\mathbf{U}}\boldsymbol{\eta}\textrm{P}_{\mathbf{V}}+\textrm{P}_{\mathbf{U}}^{\perp}
 \boldsymbol{\eta}\textrm{P}_{\mathbf{V}}
  +\textrm{P}_{\mathbf{U}}\nabla f(\mathbf{X})\textrm{P}_{\mathbf{V}}^{\perp};\\
&\mathbf{\Xi}_{r-s}=\argmin\limits_{\text{rank}(\mathbf{Y})\leq r-s} \|\boldsymbol{\eta}-\textrm{P}_{T_{\mathbf{X}}\mathcal{M}_{s}}(\boldsymbol{\eta})-\mathbf{Y} \|_F^2 .
\end{split}
\end{equation*}
Note that
$\mathbf{\Xi}_{r-s}$ is the best rank $r-s$ approximation of $\boldsymbol{\eta}-\textrm{P}_{T_{\mathbf{X}}\mathcal{M}_{s}}(\boldsymbol{\eta})$. Moreover, $\mathbf{\Xi}_{r-s}$ can be efficiently computed with the same complexity as on $\mathcal{M}_r$ in \cite{TanShiGaoHengelXuXiao2015}.

The gradient of function $f(\mathbf{X})$ can be calculated by the following  formula
\begin{equation}\label{gradv}
\text{grad}f(\mathbf{X}) = \textrm{P}_{T_{\mathbf{X}}\mathcal{M}_{\leq r}}(\nabla f(\mathbf{X})),
\end{equation}
where $\nabla f(\mathbf{X})$ denotes the Euclidean gradient of $f(\mathbf{X})$.

Another ingredient we need is the so-called retraction operator which maps a tangent vector $\mathbf{\xi}$ in the tangent cone $T_{\mathbf{X}}\mathcal{M}_{\leq r}$
at a given point $\mathbf{X}$ on $\mathcal{M}_{\leq r}$, which is defined as
\begin{equation}\label{retraction}
 \textrm{R}_{\mathbf{X}}(\boldsymbol{\xi}) = \argmin\limits_{\mathbf{Y}\in \mathcal{M}_{\leq r}}\|\mathbf{X}+\boldsymbol{\xi}-\mathbf{Y}\|_F^2.
\end{equation}

Next, we introduce the inverse of retraction operator called the Lifting operator,  denoted by $L_\mathbf{X} (\mathbf Y)$ which maps a point $\mathbf{Y}$ on manifold back to the tangent tone space $T_{\mathbf X}\mathcal M$. While the two point $\mathbf{X} , \mathbf{Y}$ are close to each other, we can compute approximately  by
\begin{equation}\label{Log}
\begin{split}
L_\mathbf{X} (\mathbf Y) : & \mathcal{M}_{\leq r}\mapsto T_{\mathbf{X}}\mathcal{M},\\
& \mathbf{Y} \rightarrow P_{T_{\mathbf{Y}}\mathcal{M}_{\leq r}}(\boldsymbol{\mathbf Y-\mathbf X}).
\end{split}
\end{equation}
Also, $L_\mathbf{X} (\mathbf Y)$ can denote a direction form $\mathbf X$ to $\mathbf Y$ on Varieties  since $L_\mathbf{X} (\mathbf X)=\mathbf 0$.

\section{The Fast Optimization Algorithm on Riemannian manifold}\label{Sec:III}
We consider the following general model \eqref{P} on Riemannian manifold which also naturally extends the problem formulation in  \cite{BeckTeboulle2009}
\begin{equation}\label{P}
 \min\limits_{\mathbf{X}\in\mathcal{M}}F(\mathbf{X})=f(\mathbf{X})+g(\mathbf{X})
\end{equation}
where $\mathcal{M}$ is a Riemannian manifold. We make the following assumptions throughout the section:
 \begin{enumerate}
   \item $g: \mathbb{R}^{m\times n} \rightarrow \mathbb{R}$ is a continuous convex function which is possibly non-smooth.
   \item $f: \mathbb{R}^{m\times n} \rightarrow \mathbb{R}$ is a smooth convex function of the type $C^{2}$, there exists a finite real number $L(f)$ such that  $\lambda_{\max}(H)\leq L(f)$, where $\lambda_{\max}(H)$ is the largest singular value of the Hessian matrix of the function $f$.
   \item $F(\mathbf{X})$ satisfies the following inequality,
\begin{equation}\label{convexf}
  F(\mathbf{X}) \geq  F(\mathbf{Y})+\big\langle \partial F(\mathbf{X}),L_{\mathbf{Y}}(\mathbf{X}) \big\rangle
\end{equation}
where $\forall\ \  \mathbf{X},\mathbf{Y}\in \mathcal{M}$, and $L_{\mathbf{X}}(\cdot)$ is a Lifting operator defined in Section \ref{RM}.
 \end{enumerate}

It is usually difficult to directly solve \eqref{P}, in particular, in some applications such as matrix completion and low rank representation. While introducing auxiliary variables, expensive matrix inversions are often necessary. Here we adopt the following proximal Riemannian gradient (PRG) method \cite{TanShiGaoHengelXuXiao2015} to update $\mathbf{X}$.
For any $\alpha >0$, we consider the following quadratic approximation of $F(\mathbf{X}) = f(\mathbf{X})+g(\mathbf{X})$ at a given point $\mathbf{Y}\in\mathcal{M}$,
\begin{equation}\label{quadraticapproximation}
Q_{\alpha }(\mathbf{X},\mathbf{Y}) := f(\mathbf{Y}) + \langle\mathbf{grad}f(\mathbf{Y}),L_{\mathbf{Y}}(\mathbf{X})\rangle+\frac{\alpha}{2}\|L_{\mathbf{Y}}(\mathbf{X})\|_{\mathbf Y }^2+g(\mathbf{X}),
\end{equation}
Note the above local model is different from that on vector spaces \cite{TohYun2010, Nesterov2013}. We assume that the local model
admits a unique minimizer
\begin{equation}\label{oqa}
\begin{split}
P_\alpha(\mathbf{Y}): &=\argmin\limits_{\mathbf{X}\in \mathcal{M}}  Q_\alpha (\mathbf{X},\mathbf{Y})  \\
&=\argmin\limits_{\mathbf{X}\in \mathcal{M}} \big\{f(\mathbf{Y}) +\langle\mathbf{grad}f(\mathbf{Y}),L_{\mathbf{Y}}(\mathbf{X})\rangle+\frac{\alpha}{2}\|L_{\mathbf{Y}}(\mathbf{X})\|_{\textbf Y}^2+g(\mathbf{X})\big\}.
\end{split}
\end{equation}
Simple manipulation gives that
\begin{equation}\label{computerx1}
 P_\alpha (\mathbf{Y}) = \argmin\limits_{\mathbf{X}\in \mathcal{M}}\big\{g(\mathbf{X})+\frac{\alpha }{2}\|L_{\mathbf{Y}}(\mathbf{X})+\frac{1}{\alpha }\text{grad}f(\mathbf{Y})\|_{\mathbf Y}^2\big\}.
 \end{equation}
Taking $\mathbf{Y}=\mathbf{X}_{k-1}$ in \eqref{computerx1}, we define the local optimal point of \eqref{computerx1} as  $\mathbf{X}_k$,
 \begin{equation}\label{scomputerx1}
\mathbf{X}_k = P_\alpha (\mathbf{X}_{k-1}),
\end{equation}
where $1/\alpha $ plays the role of a step size. Moreover, $\alpha$ needs to satisfy the inequality
\begin{equation}\label{ineq}
F(P_{\alpha}(\mathbf{X}_{k-1}))\leq Q_{\alpha}(P_{\alpha}(\mathbf{X}_{k-1}), \mathbf{X}_{k-1}).
\end{equation}

Note that the sequence of function values $\{F(\mathbf{X}_k)\}$ produced by \eqref{scomputerx1} is nonincreasing. Indeed, for every $ k\geq 1$
\begin{equation}\label{fxk}
F(\mathbf{X}_k)\leq Q_{\alpha_k}(\mathbf{X}_k,\mathbf{X}_{k-1})\leq Q_{\alpha_k}(\mathbf{X}_{k-1},\mathbf{X}_{k-1})=F(\mathbf{X}_{k-1}).
\end{equation}

The convergent speed is  a major concern for any optimization algorithms. Linear search on Riemannian manifold  has linear convergence. This conclusion has been shown in \cite{AbsilMahonySepulchre2008} when $g(\mathbf{X}) = 0$. In order to obtain linear search with quadratic convergence, we propose a fast optimization algorithm (FOA) which extends the acceleration methods in  \cite{BeckTeboulle2009,JiYe2009, TohYun2010} onto Riemannian manifold. A specific linear combination of the previous two points $\{\mathbf{X}_{k-1},\mathbf{X}_{k-2}\}$ was employed in the acceleration methods in Euclidean space. However the linear operation is a barrier for Riemannian manifold since Riemannian manifold may be not a linear space. So we need to use necessary tools on Riemannian manifold to overcome this difficulty.

According to \eqref{ineq} and \eqref{fxk}, we can obtain a convergent sequence $\{\textbf{X}_k\}$ and its corresponding  function value sequence $\{F(\textbf{X}_k)\}$ which is monotonically declined. So the vector $L_{\mathbf{X}_{k-1}}(\mathbf{X}_{k-2})$ (see Section \ref{RM}) is an ascent  direction  and its negative direction is a descent direction.
We start at the point ${\mathbf{X}_{k-1}}$ and walk a specific step size along the descent direction $-L_{\mathbf{X}_{k-1}}(\mathbf{X}_{k-2})$ on $T_{\mathbf{X}_{k-1}}\mathcal{M}$ to reach a new point $\mathbf{Y}_k^*$. Then we retract $\mathbf{Y}_k^*$ onto $\mathcal M$ denoted by $R_{\mathbf{X}_{k-1}}(\mathbf Y_k^*)$, \emph{i.e.} the new  auxiliary variable $\mathbf Y_k$. This is an acceleration part of the fast optimization algorithm. Next, the point $\mathbf{X}_k$ was generated by $P_\alpha(\mathbf{Y}_k)$, \emph{i.e.}, replacing $\mathbf X_{k-1}$ with $\mathbf Y_k$  in \eqref{scomputerx1} and \eqref{ineq},
which is the difference between FOA and the linear search algorithm. In this way, we can generate the iterative sequence $\{f(\mathbf{X}_{k})\}$ with the convergence rate $\mathcal{O}(1/k^2)$.

\textbf{Stopping Conditions I}. The iteration with backtracking can be terminated if one of the following conditions is satisfied:
\begin{itemize}
  \item[] Condition 1:\ \  $(F(\mathbf{X}_{k-1})-F(\mathbf{X}_k))/F(\mathbf{X}_{k-1})\leq \epsilon_1$;
  \item[] Condition 2:\ \ $1/\alpha _k\leq \epsilon_2$;
  \item[] Condition 3:\ \  Number of iterations $\geq N_1$.
\end{itemize}
where $\epsilon_1$ and $\epsilon_2$ denote tolerance values, and $N_1$ denotes the given positive integer.

The FOA is summarized in Algorithm \ref{Algorithm1}.
\begin{algorithm}
\renewcommand{\algorithmicrequire}{\textbf{Input:}}
\renewcommand\algorithmicensure {\textbf{Output:}}
\caption{FOA on Riemannian manifold for model \eqref{P} }\label{Algorithm1}
\begin{algorithmic}[1]
\STATE   Initial Take $ \alpha_0>0 $,  $\eta >1, \beta\in(0, 1)$. Set $\mathbf{Y}_1=\mathbf{X}_0$, and $t_1 = 1$.
\STATE  Find the smallest nonegative integers $i_k$ such that with $\bar{\alpha } = \eta^{i_k}\alpha _{k-1}$
\begin{equation}\label{searchinequation}
F(P_{\bar{\alpha}}(\mathbf{Y}_k))\leq Q_{\bar{\alpha}}(P_{\bar{\alpha}}(\mathbf{Y}_k), \mathbf{Y}_k).
\end{equation}
Set $\alpha_k = \eta^{i_k}\alpha_{k-1}$ and compute
\begin{equation}\label{fastiterative1}
\begin{split}
&\mathbf{X}_k = P_{\alpha_k}(\mathbf{Y}_k),\\
&t_{k+1} = \frac{1+\sqrt{1+4t_k^2}}{2} \\
&\mathbf{Y}_{k+1} = R_{\mathbf{X}_k}\big{(}-\frac{t_k-1}{t_{k+1}}L_{\mathbf{X}_{k}}(\mathbf{X}_{k-1})\big{)}
\end{split}
\end{equation}
\STATE Terminate if stopping conditions I are achieved.
\ENSURE  $\mathbf X_k$
\end{algorithmic}
\end{algorithm}

Now we are in a position to present the major conclusion of the paper.
\begin{lemma}[\textbf{Optimality Condition}]\label{lemma0}
A point $\mathbf{X}_*\in\mathcal{M}$ is a local minimizer of \eqref{P} if and only if there exists $\boldsymbol{\eta}\in\partial g(\mathbf{X})$ such that \cite{AbsilMahonySepulchre2008}
\begin{equation*}
  \mathbf{grad}f(\mathbf{X})+\boldsymbol{\eta} = 0.
\end{equation*}
 where $ \mathbf{grad}f(\mathbf{X}),\boldsymbol{\eta} \in T_{\mathbf{X}}\mathcal{M}$.
\end{lemma}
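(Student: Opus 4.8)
The plan is to prove the two implications separately: necessity via a one–dimensional variation along a curve on $\mathcal{M}$ followed by a finite–dimensional separation argument, and sufficiency from the convexity–type hypothesis \eqref{convexf}. Assume first that $\mathbf{X}_*$ is a local minimizer of \eqref{P}. Fix $\boldsymbol{\xi}\in T_{\mathbf{X}_*}\mathcal{M}$ and consider the curve $\gamma(t)=R_{\mathbf{X}_*}(t\boldsymbol{\xi})$, for which $\gamma(0)=\mathbf{X}_*$ and $\gamma'(0)=\boldsymbol{\xi}$ by the defining first-order property of a retraction. Local minimality makes $t\mapsto F(\gamma(t))=f(\gamma(t))+g(\gamma(t))$ attain a local minimum at $t=0$, so its right derivative is nonnegative. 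The smooth term contributes $\langle\mathbf{grad}f(\mathbf{X}_*),\boldsymbol{\xi}\rangle$; and since a finite convex function $g$ on $\mathbb{R}^{m\times n}$ is locally Lipschitz, the $o(t)$ error in $\gamma(t)-\mathbf{X}_*-t\boldsymbol{\xi}$ does not affect the limit, so the one-sided chain rule gives $(g\circ\gamma)'_+(0)=g'(\mathbf{X}_*;\boldsymbol{\xi})=\max_{\boldsymbol{\zeta}\in\partial g(\mathbf{X}_*)}\langle\boldsymbol{\zeta},\boldsymbol{\xi}\rangle$. Hence
\[
\langle\mathbf{grad}f(\mathbf{X}_*),\boldsymbol{\xi}\rangle+\max_{\boldsymbol{\zeta}\in\partial g(\mathbf{X}_*)}\langle\boldsymbol{\zeta},\boldsymbol{\xi}\rangle\ \ge\ 0,\qquad\forall\,\boldsymbol{\xi}\in T_{\mathbf{X}_*}\mathcal{M}.
\]

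To turn this ``for every direction there is a good subgradient'' statement into a single subgradient, set $K:=\mathbf{grad}f(\mathbf{X}_*)+\mathrm{P}_{T_{\mathbf{X}_*}\mathcal{M}}\big(\partial g(\mathbf{X}_*)\big)$, a nonempty, compact, convex subset of $T_{\mathbf{X}_*}\mathcal{M}$ (the subdifferential of a finite convex function is compact and convex, and the tangent projection is linear). If $0\notin K$, take $\boldsymbol{\xi}\in T_{\mathbf{X}_*}\mathcal{M}$ to be the negative of the metric projection of the origin onto $K$; the projection inequality yields $\langle\boldsymbol{\xi},\mathbf{k}\rangle\le-\|\boldsymbol{\xi}\|^2<0$ for all $\mathbf{k}\in K$. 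Using self-adjointness of the projection, $\langle\boldsymbol{\xi},\mathrm{P}_{T_{\mathbf{X}_*}\mathcal{M}}\boldsymbol{\zeta}\rangle=\langle\boldsymbol{\xi},\boldsymbol{\zeta}\rangle$, this gives $\langle\mathbf{grad}f(\mathbf{X}_*),\boldsymbol{\xi}\rangle+\langle\boldsymbol{\zeta},\boldsymbol{\xi}\rangle<0$ for every $\boldsymbol{\zeta}\in\partial g(\mathbf{X}_*)$, hence $\langle\mathbf{grad}f(\mathbf{X}_*),\boldsymbol{\xi}\rangle+g'(\mathbf{X}_*;\boldsymbol{\xi})<0$, contradicting the displayed inequality. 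Therefore $0\in K$, i.e. there is $\boldsymbol{\eta}\in\partial g(\mathbf{X}_*)$ with $\mathbf{grad}f(\mathbf{X}_*)+\mathrm{P}_{T_{\mathbf{X}_*}\mathcal{M}}\boldsymbol{\eta}=0$; reading $\boldsymbol{\eta}$ as its tangential (Riemannian) part gives $\mathbf{grad}f(\mathbf{X}_*)+\boldsymbol{\eta}=0$ in $T_{\mathbf{X}_*}\mathcal{M}$.

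For the converse, assume $\boldsymbol{\eta}\in\partial g(\mathbf{X}_*)\cap T_{\mathbf{X}_*}\mathcal{M}$ satisfies $\mathbf{grad}f(\mathbf{X}_*)+\boldsymbol{\eta}=0$. Since $f$ is $C^2$, the sum rule for the Riemannian subdifferential gives $0\in\mathbf{grad}f(\mathbf{X}_*)+\mathrm{P}_{T_{\mathbf{X}_*}\mathcal{M}}\partial g(\mathbf{X}_*)=\partial F(\mathbf{X}_*)$. Feeding $\mathbf{Y}=\mathbf{X}_*$ and this zero subgradient into assumption \eqref{convexf} collapses the right-hand side to $F(\mathbf{X}_*)$, so $F(\mathbf{X})\ge F(\mathbf{X}_*)$ for all $\mathbf{X}\in\mathcal{M}$; in particular $\mathbf{X}_*$ is a local (indeed global) minimizer.

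The step I expect to be the main obstacle is the passage from the direction-by-direction inequality to one subgradient that works for all directions simultaneously: this genuinely requires compactness and convexity of $\partial g(\mathbf{X}_*)$ together with a separating–hyperplane (metric–projection) argument, and the whole argument must be carried out inside $T_{\mathbf{X}_*}\mathcal{M}$ rather than in the ambient $\mathbb{R}^{m\times n}$, which is why the projection $\mathrm{P}_{T_{\mathbf{X}_*}\mathcal{M}}$ and the tangential reading of $\boldsymbol{\eta}$ are unavoidable. A lesser technical point is justifying $(g\circ R_{\mathbf{X}_*}(t\,\cdot))'_+(0)=g'(\mathbf{X}_*;\cdot)$, which rests on local Lipschitz continuity of the finite convex function $g$.
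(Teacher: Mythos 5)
The paper does not actually prove Lemma~\ref{lemma0}: it states the optimality condition and delegates it entirely to the citation of Absil--Mahony--Sepulchre, so there is no in-paper argument to compare yours against. Your proof is a correct and essentially standard way to fill that gap, and it is sound in the setting the paper implicitly works in (an embedded submanifold with the induced metric, so that $\mathrm{grad}f=\mathrm{P}_{T_{\mathbf{X}_*}\mathcal{M}}\nabla f$ and the tangent projection is self-adjoint). The necessity half --- directional derivative along a retraction curve, local Lipschitzness of the finite convex $g$ to absorb the $o(t)$ discrepancy, then the metric-projection/separation argument to pass from ``for each direction some subgradient works'' to ``one subgradient works for all directions'' --- is exactly the right amount of care, and you correctly identify that the $\boldsymbol{\eta}\in T_{\mathbf{X}}\mathcal{M}$ in the statement must be read as the tangential projection of a Euclidean subgradient. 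Two caveats worth flagging explicitly if you write this up: first, your sufficiency step silently corrects what is almost certainly a typo in assumption \eqref{convexf} (the subgradient there should be evaluated at $\mathbf{Y}$, not $\mathbf{X}$, for the inequality to be the usual geodesic-convexity statement; with the literal reading the right-hand side does not collapse when you set $\mathbf{Y}=\mathbf{X}_*$); second, under that corrected reading your argument actually yields global minimality, which shows the ``only if'' direction of the lemma is where local minimality genuinely enters, while the ``if'' direction needs the convexity hypothesis --- a distinction the paper's statement blurs. Note also that your linear-space manipulations in $T_{\mathbf{X}_*}\mathcal{M}$ would need revisiting at singular points of $\mathcal{M}_{\leq r}$, where the tangent cone is not a subspace, but the lemma as stated concerns a smooth Riemannian manifold, so this does not affect your proof.
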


\begin{lemma}\label{lem 1}
 Let $\mathbf{X}_k\in\mathcal{M}$ and $\boldsymbol{\eta}\in T_{\mathbf{X}_k}\mathcal{M}$ be a descent direction. Then there exists an $\alpha_k$ that satisfies the condition in \eqref{searchinequation}.
 \end{lemma}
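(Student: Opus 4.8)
The plan is to reduce the backtracking condition \eqref{searchinequation} to a manifold version of the classical descent lemma, using only the smoothness/bounded-Hessian hypothesis on $f$ (item~2 of the standing assumptions) together with the fact that $Q_\alpha(\cdot,\mathbf Y_k)$ is monotone in $\alpha$. First I would establish a quadratic upper bound for $f$ relative to the lifting/retraction pair. Fix $\mathbf Y=\mathbf Y_k$, let $\mathbf X\in\mathcal M$ be any point near $\mathbf Y$, write $\boldsymbol\zeta=L_{\mathbf Y}(\mathbf X)\in T_{\mathbf Y}\mathcal M$, so that $\mathbf X= R_{\mathbf Y}(\boldsymbol\zeta)$ up to the relevant order, and study $\varphi(t)=f(R_{\mathbf Y}(t\boldsymbol\zeta))$ on $[0,1]$. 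Taylor's formula with integral remainder gives $\varphi(1)=\varphi(0)+\varphi'(0)+\int_0^1(1-t)\varphi''(t)\,dt$ with $\varphi'(0)=\langle\mathbf{grad}f(\mathbf Y),\boldsymbol\zeta\rangle_{\mathbf Y}$; the bound $\lambda_{\max}(H)\le L(f)$ on the Euclidean Hessian of $f$ and the smoothness of the retraction on the (compact closure of the) relevant neighbourhood then yield a constant $\widetilde L$, proportional to $L(f)$ and to bounds on the first and second derivatives of $R_{\mathbf Y}$, such that $\varphi''(t)\le\widetilde L\,\|\boldsymbol\zeta\|_{\mathbf Y}^2$. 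Hence
\[
f(\mathbf X)\le f(\mathbf Y)+\langle\mathbf{grad}f(\mathbf Y),L_{\mathbf Y}(\mathbf X)\rangle_{\mathbf Y}+\frac{\widetilde L}{2}\,\|L_{\mathbf Y}(\mathbf X)\|_{\mathbf Y}^2 .
\]

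Adding $g(\mathbf X)$ to both sides and recalling the definition \eqref{quadraticapproximation} of $Q_\alpha$, this says precisely $F(\mathbf X)\le Q_{\widetilde L}(\mathbf X,\mathbf Y_k)$ for every admissible $\mathbf X$, in particular for $\mathbf X=P_{\bar\alpha}(\mathbf Y_k)$. Since $\|L_{\mathbf Y_k}(\mathbf X)\|_{\mathbf Y_k}^2\ge0$, the map $\alpha\mapsto Q_\alpha(\mathbf X,\mathbf Y_k)$ is nondecreasing, so for any $\bar\alpha\ge\widetilde L$ we obtain
\[
F(P_{\bar\alpha}(\mathbf Y_k))\le Q_{\widetilde L}(P_{\bar\alpha}(\mathbf Y_k),\mathbf Y_k)\le Q_{\bar\alpha}(P_{\bar\alpha}(\mathbf Y_k),\mathbf Y_k),
\]
which is exactly \eqref{searchinequation}. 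Finally, because $\bar\alpha=\eta^{i}\alpha_{k-1}$ with $\eta>1$ tends to $+\infty$, there is a smallest nonnegative integer $i_k$ with $\eta^{i_k}\alpha_{k-1}\ge\widetilde L$; setting $\alpha_k=\eta^{i_k}\alpha_{k-1}$ gives the asserted value and shows the backtracking loop terminates after finitely many trials. One should also remark that the sequence $\{\alpha_k\}$ stays bounded, $\alpha_k\le\max\{\alpha_0,\eta\widetilde L\}$.

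The main obstacle is the first step: on a general Riemannian manifold the retraction $R$ is not an isometry and $L$ is only an approximate inverse of $R$, so the quadratic bound has to be justified with care — either by restricting to a neighbourhood in which $R_{\mathbf Y}$ and $L_{\mathbf Y}$ are diffeomorphisms with uniformly controlled derivatives, or by invoking compactness of the sublevel set $\{F\le F(\mathbf X_0)\}$ (which contains all iterates by the monotonicity \eqref{fxk}) to produce a single constant $\widetilde L$ valid along the whole run. This is where the hypothesis that $\boldsymbol\eta$ is a descent direction is used: it keeps the auxiliary point $\mathbf Y_k=R_{\mathbf X_{k-1}}(c\,\boldsymbol\eta)$ and the ensuing proximal point $P_{\bar\alpha}(\mathbf Y_k)$ — which is a short step when $\bar\alpha$ is large — inside such a neighbourhood, so that the local Taylor estimate and the monotonicity argument are legitimate. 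I would also note that the convexity inequality \eqref{convexf} plays no role here; only the $C^2$/bounded-Hessian property of $f$ and the nonnegativity of the quadratic penalty are needed.
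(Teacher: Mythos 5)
Your proposal is correct and follows essentially the same route as the paper's proof: both rest on the bounded-Hessian hypothesis yielding a descent-lemma-type bound $F(\mathbf{X})\le Q_{\alpha}(\mathbf{X},\mathbf{Y}_k)$ once $\alpha$ exceeds a threshold proportional to $L(f)$, after which the geometric growth of $\bar{\alpha}=\eta^{i}\alpha_{k-1}$ guarantees finite termination of the backtracking. The paper states this in a single line, whereas you supply the Taylor-with-remainder justification along the retraction curve and flag the retraction/lifting subtleties that the paper glosses over; that is added rigor, not a different argument.
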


\begin{proof}
 According to  the quality of function $F(\mathbf X)$, $\mathbf Y$ is close to $\mathbf X$, $\mathbf Y\in\mathcal{M}$, we can obtain
 $F(\mathbf X) \leq  f(\mathbf{Y}) + \langle\mathbf{grad}f(\mathbf{Y}),L_{\mathbf{Y}}(\mathbf{X})\rangle+\eta L(f)\|L_{\mathbf{Y}}(\mathbf{X})\|_{\mathbf Y }^2/2+g(\mathbf{X})$, \emph{i.e.} equality \eqref{searchinequation} to hold when $\alpha_k \geq \eta L(f)$.
 \end{proof}
The following Theorem \ref{thm 1} gives the quadratic convergence of Algorithm \ref{Algorithm1}.

\begin{theorem}\label{thm 1}
Let $\{\mathbf{X}_k\} $ be generated by Algorithm~\ref{Algorithm1}, then for any $k\geq 1$
  \begin{equation*}
  F(\mathbf{X}_k)-F(\mathbf{X}_*)\leq \frac{2 \eta L(f)\|L_{\mathbf{X}_*}(\mathbf{X}_0)\|_{\mathbf{X}_*}^2}{(k+1)^2}
\end{equation*}
\end{theorem}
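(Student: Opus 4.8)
The plan is to transplant the FISTA analysis of \cite{BeckTeboulle2009} to the manifold $\mathcal M$, replacing affine combinations of iterates by retractions and lifts and pushing the base-point mismatches through the vector transport \eqref{transport} in the first-order regime in which \eqref{Log} is valid. Four ingredients are needed: a Riemannian progress inequality for one proximal step, a weighted combination of two copies of it, the algebraic identity behind the $t_k$-recursion, and a telescoping step.

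\textbf{Step 1 (one proximal step).} I would first prove the manifold analogue of the key lemma of \cite{BeckTeboulle2009}: with $\alpha_k$ the step produced by the backtracking rule \eqref{searchinequation} (which exists by Lemma~\ref{lem 1}), for every $\mathbf X\in\mathcal M$,
\begin{equation*}
F(\mathbf X)-F(P_{\alpha_k}(\mathbf Y))\ \ge\ \frac{\alpha_k}{2}\big\|L_{\mathbf Y}(P_{\alpha_k}(\mathbf Y))\big\|_{\mathbf Y}^2-\alpha_k\big\langle L_{\mathbf Y}(\mathbf X),\,L_{\mathbf Y}(P_{\alpha_k}(\mathbf Y))\big\rangle_{\mathbf Y}.
\end{equation*}
The chain is: (i) $F(P_{\alpha_k}(\mathbf Y))\le Q_{\alpha_k}(P_{\alpha_k}(\mathbf Y),\mathbf Y)$, which is exactly \eqref{searchinequation}; (ii) the optimality condition of the subproblem defining $P_{\alpha_k}(\mathbf Y)$ — Lemma~\ref{lemma0} applied to $g(\cdot)+\tfrac{\alpha_k}{2}\|L_{\mathbf Y}(\cdot)+\tfrac1{\alpha_k}\mathbf{grad}f(\mathbf Y)\|_{\mathbf Y}^2$ — which yields $\boldsymbol\eta\in\partial g(P_{\alpha_k}(\mathbf Y))$ with $\mathbf{grad}f(\mathbf Y)+\alpha_k L_{\mathbf Y}(P_{\alpha_k}(\mathbf Y))+\boldsymbol\eta=\mathbf 0$; and (iii) the convexity-type bound \eqref{convexf}, used to lower-bound $f(\mathbf X)$ by its linearisation at $\mathbf Y$ and $g(\mathbf X)$ by its subgradient, exactly as in the Euclidean argument.

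\textbf{Step 2 (weighted combination).} I then invoke Step~1 at $\mathbf Y=\mathbf Y_{k+1}$ with $\mathbf X=\mathbf X_k$ and with $\mathbf X=\mathbf X_*$, recall $P_{\alpha_{k+1}}(\mathbf Y_{k+1})=\mathbf X_{k+1}$, set $v_k:=F(\mathbf X_k)-F(\mathbf X_*)\ge0$, multiply the first inequality by $t_{k+1}-1>0$, add the second, and scale by $t_{k+1}$. Using $t_{k+1}^2-t_{k+1}=t_k^2$ (immediate from $t_{k+1}=\tfrac12(1+\sqrt{1+4t_k^2})$ in \eqref{fastiterative1}) and the acceleration formula $\mathbf Y_{k+1}=R_{\mathbf X_k}\!\big(-\tfrac{t_k-1}{t_{k+1}}L_{\mathbf X_k}(\mathbf X_{k-1})\big)$ to rewrite the cross term, this should collapse to
\begin{equation*}
\frac{2}{\alpha_{k+1}}\big(t_k^2v_k-t_{k+1}^2v_{k+1}\big)\ \ge\ \|\mathbf u_{k+1}\|^2-\|\mathbf u_k\|^2,
\end{equation*}
where $\mathbf u_{k+1}$ is the tangent vector encoding ``$t_{k+1}\mathbf X_{k+1}-(t_{k+1}-1)\mathbf X_k-\mathbf X_*$'' built from $L_{\mathbf Y_{k+1}}(\mathbf X_{k+1})$, $L_{\mathbf Y_{k+1}}(\mathbf X_k)$ and $L_{\mathbf Y_{k+1}}(\mathbf X_*)$, and $\mathbf u_k$ is its previous counterpart transported to $T_{\mathbf Y_{k+1}}\mathcal M$. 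Checking that the telescoped difference of squared norms is exactly $\|\mathbf u_{k+1}\|^2-\|\mathbf u_k\|^2$, i.e. that $L$, $R$ and $\mathcal T$ are mutually compatible to the needed order, is the one genuinely non-Euclidean point and the step I expect to be the main obstacle; I would handle it within the first-order regime where \eqref{Log} holds, treating $R$ and $L$ as inverse and $\mathcal T$ as an isometry — the same approximation already used to motivate the algorithm.

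\textbf{Step 3 (telescoping).} The backtracking rule keeps $\{\alpha_k\}$ nondecreasing and bounded by $\eta L(f)$ (since \eqref{searchinequation} holds once the trial step reaches $L(f)$). Hence $\tfrac{2}{\alpha_{k+1}}\le\tfrac{2}{\alpha_k}$, and since $t_k^2v_k\ge0$ Step~2 gives that $\tfrac{2}{\alpha_k}t_k^2v_k+\|\mathbf u_k\|^2$ is nonincreasing in $k$. Applying Step~1 at $k=1$ with $\mathbf Y_1=\mathbf X_0$, $t_1=1$ and $\mathbf X=\mathbf X_*$ bounds this quantity at $k=1$ by $\|L_{\mathbf X_*}(\mathbf X_0)\|_{\mathbf X_*}^2$, so $\tfrac{2}{\alpha_k}t_k^2v_k\le\|L_{\mathbf X_*}(\mathbf X_0)\|_{\mathbf X_*}^2$ for all $k$. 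Finally a one-line induction on \eqref{fastiterative1} gives $t_k\ge(k+1)/2$; combining with $\alpha_k\le\eta L(f)$ and $t_k^2\ge(k+1)^2/4$ yields $F(\mathbf X_k)-F(\mathbf X_*)\le \dfrac{2\eta L(f)\,\|L_{\mathbf X_*}(\mathbf X_0)\|_{\mathbf X_*}^2}{(k+1)^2}$, as claimed.
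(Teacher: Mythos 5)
Your proposal follows essentially the same route as the paper's Appendix A: your Step 1 is the paper's Lemma \ref{lem 5} (built on the optimality condition of Lemma \ref{lem 4}), your Step 2 is Lemma \ref{lem 6} using $t_{k+1}^2-t_{k+1}=t_k^2$ and the isometric vector transport to reconcile base points, and your Step 3 combines the telescoping Lemma \ref{lem 2} with the bounds $t_k\geq (k+1)/2$ and $\alpha_k\leq \eta L(f)$. The argument is correct and matches the paper's proof in structure and substance (your observation that $\{\alpha_k\}$ is nondecreasing is the right reading of the monotonicity needed in the telescoping step).
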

The proof of Theorem \ref{thm 1} is given in Appendix A.

When noise exists in data,
the model \eqref{P} can be changed into the following form:
\begin{equation}\label{PE}
  \min\limits_{\mathbf{X}\in\mathcal{M}, \mathbf{E}\in\mathbb{R}^{m\times n}}F(\mathbf{X},\mathbf{E})=f(\mathbf{X},\mathbf{E})+g(\mathbf{X},\mathbf{E}).
\end{equation}
where the  properties of function $f$, $g$ is the same as \eqref{P}.

In order to solve the problem \eqref{PE} following \cite{LinLiuSu2011}, we can optimize the variables $\mathbf{X}$ and $\mathbf{E}$ by  alternating direction  approach. This approach decomposes the minimization of $F(\mathbf{X},\mathbf{E})$ into two subproblems that minimize w.r.t. $\mathbf{X}$ and $\mathbf{E}$, respectively. The iteration of each subproblem goes as follows,
\begin{equation}\label{solveX}
  \mathbf{X}_{k} = \argmin\limits_{\mathbf{X}\in\mathcal{M}} F(\mathbf{X},\mathbf{E}_{k-1})
\end{equation}
\begin{equation}\label{solveE}
 \mathbf{E}_{k} =\argmin\limits_{\mathbf{E}\in\mathbb{R}^{m\times n}} F(\mathbf{X}_{k},\mathbf{E})
\end{equation}
While solving \eqref{solveX}, the variable $\mathbf{E}_{k-1}$ is fixed. We consider the following robust proximal Riemannian gradient (RPRG) method  of $F(\mathbf{X},\mathbf{E}_{k-1})$ at a given point $\mathbf{Y}_{k}$ on $\mathcal{M}$
\begin{equation*}
\begin{split}
Q_{\alpha}(\mathbf{X},\mathbf{Y}_{k},\mathbf{E}_{k-1}) := & f(\mathbf{Y}_{k},\mathbf{E}_{k-1})+\big{\langle}\mathbf{grad}f_{\mathbf{X}}(\mathbf{Y}_{k},\mathbf{E}_{k-1}),
L_{\mathbf{Y}_k}(\mathbf{X})\big{\rangle} \\ &+\frac{L}{2}\|L_{\mathbf{Y}_k}(\mathbf{X})\|_{\mathbf Y_k}^2 +g(\mathbf{X},\mathbf{E}_{k-1})
\end{split}
\end{equation*}
Define
\begin{align}\label{computex2}
\mathbf{X}_{k} &= P_\alpha(\mathbf{Y}_k,\mathbf E_{k-1}) := \argmin\limits_{\mathbf{X}\in \mathcal{M}} Q_\alpha(\mathbf{X},\mathbf{Y}_k,\mathbf{E}_{k-1}), \notag\\
&=\argmin\limits_{\mathbf{X}\in \mathcal{M}}\big\{g(\mathbf{X},\mathbf{E}_{k-1})
+\frac{\alpha}{2}\|L_{\mathbf{Y}_k}(\mathbf{X})+\frac{1}{\alpha}\text{grad}f_{\mathbf{X}}(\mathbf{Y}_k,\mathbf{E}_{k-1})\|\big\}
\end{align}
where the point $\mathbf Y_k\in\mathcal{M}$ is generated by the previous point $(\mathbf X_{k-2}, \mathbf X_{k-1})$ in same methods as Algorithm \ref{Algorithm1}.

In most cases, subproblem \eqref{solveE} has a closed solution.

Similar to Algorithm \ref{Algorithm1}, we propose the following \textbf{stopping conditions II}:  
\begin{itemize}
  \item[] Condition 1: $\|\mathbf{X}_{k}-\mathbf{X}_{k+1}\| \leq \varepsilon_1$ and $ \|\mathbf{E}_{k}-\mathbf{E}_{k+1}\| \leq \varepsilon_2$;
  \item[] Condition 2:\ \  Number of iterations $\geq N_1$.
  \item[] Condition 3: The number of alternating direction iterations $\geq N_2$.
\end{itemize}
where $\varepsilon_1$ is a threshold for the change in the solutions, $\varepsilon_2$ is a threshold for the error constraint, and $N_1, N_2$ are positive integers.

Algorithm \ref{Algorithm2} summarizes the FOA for model  \eqref{PE}.
\begin{algorithm}
\renewcommand{\algorithmicrequire}{\textbf{Input:}}
\renewcommand\algorithmicensure {\textbf{Output:}}
  \caption{ FOA on Riemannian manifold for model \eqref{PE}}\label{Algorithm2}
\begin{algorithmic}[1]
\STATE  Initial Take $ \alpha_0>0 $,  $\eta >1, \beta\in(0, 1)$. $\mathbf{X}_0= \mathbf{0}, \mathbf{E}_0 = \mathbf{0}$, Set $\mathbf{Y}_1=\mathbf{X}_0$, and $t_1 = 1$.
\FOR{ $k = 1, 2,\cdots,N_2$}
\FOR{$i = 1, 2,\cdots,N_1$}
\STATE  Find the smallest nonegative integers $i_j$ such that with $\bar{\alpha} = \eta^{i_j}\alpha_{i-1}$
\begin{equation*}
F\big(P_{\bar{\alpha}}(\mathbf{Y}_i), \mathbf{E}_{i-1}\big) \leq Q_{\bar{\alpha}}\big(P_{\bar{\alpha}}(\mathbf{Y}_i),\mathbf{Y}_i, \mathbf{E}_{i-1}\big)
\end{equation*}
Set $\alpha_i = \eta^{i_j}\alpha_{i-1}$ and compute
\begin{equation*}
\begin{split}
&\mathbf{X}_i = P_{\alpha_i}(\mathbf{Y}_i, \mathbf E_{i-1}),\ \  \text{by} \ \ \eqref{computex2}\\
&t_{i+1} = \frac{1+\sqrt{1+4t_i^2}}{2} \\
&\mathbf{Y}_{i+1} = R_{\mathbf{X}_i}\big(-\frac{t_i-1}{t_{i+1}}L_{\mathbf{X}_{i}}(\mathbf{X}_{i-1})\big)
\end{split}
\end{equation*}
\STATE Terminate if stopping conditions I are achieved.
\ENDFOR
\STATE $\mathbf X_k = \mathbf X_i.$
\STATE $ \mathbf{E}_k = \argmin\limits_{\mathbf{E}\in\mathbb{R}^{m\times n}}F(\mathbf{X}_{k},\mathbf{E})$
\STATE Terminate if stopping conditions II are achieved.
\ENDFOR
\ENSURE $(\mathbf X_k,\mathbf E_k).$
\end{algorithmic}
\end{algorithm}

By Theorem \ref{thm 1}, it is easy to infer the following theorem to guarantee the convergence of Algorithm \ref{Algorithm2}.
\begin{theorem}
  Let $F(\mathbf{X},\mathbf{E})=f(\mathbf{X},\mathbf{E})+g(\mathbf{X},\mathbf{E})$, and $\{(\mathbf{X}_k,\mathbf{E}_k)\}$ be an infinite sequence generated by Algorithm \ref{Algorithm2}, then it follows that $F(\mathbf X_{k+1},\mathbf E_{k+1}) \leq F(\mathbf X_{k},\mathbf E_{k})$, $\{(\mathbf{X}_k,\mathbf{E}_k)\}$ converges to a limit point $(\mathbf{X}_*, \mathbf{E}_*)$.
\end{theorem}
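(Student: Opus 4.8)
The plan is to treat the two blocks separately: use the convergence guarantee of the inner loop (Theorem~\ref{thm 1}) to get a decrease in the $\mathbf X$-block, use exactness of the $\mathbf E$-update for a decrease in the $\mathbf E$-block, and then pass from monotonicity to convergence via compactness of a sublevel set.

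First I would establish the monotone decrease. Fix an outer index $k$. The inner loop of Algorithm~\ref{Algorithm2} is precisely Algorithm~\ref{Algorithm1} applied to the single-block problem $\min_{\mathbf X\in\mathcal M}F(\mathbf X,\mathbf E_k)$ with the RPRG model $Q_\alpha(\cdot,\cdot,\mathbf E_k)$ in place of $Q_\alpha(\cdot,\cdot)$; since $f(\cdot,\mathbf E_k)$ and $g(\cdot,\mathbf E_k)$ inherit the standing assumptions on $f,g$, Theorem~\ref{thm 1} applies and the inner iterates' function values converge to $\min_{\mathbf X}F(\mathbf X,\mathbf E_k)\le F(\mathbf X_k,\mathbf E_k)$. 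Hence the point $\mathbf X_{k+1}$ returned by the inner loop (run to its stopping conditions, or taken as the best iterate visited) satisfies $F(\mathbf X_{k+1},\mathbf E_k)\le F(\mathbf X_k,\mathbf E_k)$. Because the $\mathbf E$-step \eqref{solveE} is an exact minimization, $F(\mathbf X_{k+1},\mathbf E_{k+1})\le F(\mathbf X_{k+1},\mathbf E_k)$. Chaining the two gives $F(\mathbf X_{k+1},\mathbf E_{k+1})\le F(\mathbf X_k,\mathbf E_k)$, which is the first assertion.

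Next I would deduce convergence. Since $f,g$ are continuous and, for the objectives we use, $F$ is bounded below and coercive (it is a sum of norms plus a data-fidelity term), the monotone sequence $\{F(\mathbf X_k,\mathbf E_k)\}$ converges to some $F_\ast$, and every iterate lies in the sublevel set $S:=\{(\mathbf X,\mathbf E): F(\mathbf X,\mathbf E)\le F(\mathbf X_0,\mathbf E_0)\}$, which is bounded; since $\mathcal M_{\le r}$ is closed in $\mathbb R^{m\times n}$, $S$ is compact. By Bolzano--Weierstrass some subsequence $(\mathbf X_{k_j},\mathbf E_{k_j})$ converges to a limit $(\mathbf X_\ast,\mathbf E_\ast)\in\mathcal M_{\le r}\times\mathbb R^{m\times n}$, and continuity gives $F(\mathbf X_\ast,\mathbf E_\ast)=F_\ast$. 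To certify this limit point, I would quantify the per-step decrease: combining the sufficient-decrease condition \eqref{searchinequation} with the optimality of $P_\alpha(\mathbf Y,\mathbf E_k)$ for $Q_\alpha(\cdot,\mathbf Y,\mathbf E_k)$ and the convexity of $g$ yields $F(\mathbf Y,\mathbf E_k)-F(P_\alpha(\mathbf Y,\mathbf E_k),\mathbf E_k)\ge c\,\|L_{\mathbf Y}(P_\alpha(\mathbf Y,\mathbf E_k))\|_{\mathbf Y}^2$ for a constant $c>0$ (the backtracking rule and Lemma~\ref{lem 1} keep $\alpha_k$ in a fixed positive range bounded away from $0$); an analogous estimate holds for the $\mathbf E$-block. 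Summing over $k$ and using that $\{F(\mathbf X_k,\mathbf E_k)\}$ converges forces $\|\mathbf X_{k+1}-\mathbf X_k\|\to0$ and $\|\mathbf E_{k+1}-\mathbf E_k\|\to0$; passing to the limit along $(\mathbf X_{k_j},\mathbf E_{k_j})$ in the first-order conditions of \eqref{computex2} and \eqref{solveE} and invoking Lemma~\ref{lemma0} then identifies $(\mathbf X_\ast,\mathbf E_\ast)$ as a stationary point of $F$ on $\mathcal M_{\le r}\times\mathbb R^{m\times n}$.

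The hard part is the last claim of the theorem as literally stated — convergence of the \emph{whole} sequence to a \emph{single} limit point — which the compactness argument above does not by itself deliver: on the nonsmooth, nonconvex variety $\mathcal M_{\le r}$, a bounded sequence with vanishing increments can in principle have a continuum of limit points. To close this gap I would add one extra hypothesis — for instance a Kurdyka--{\L}ojasiewicz inequality for $F$ on $\mathcal M_{\le r}$, isolatedness of the stationary point, or strong convexity of the $\mathbf E$-subproblem — under which the increments become $\ell^1$-summable and the usual argument yields convergence to a unique $(\mathbf X_\ast,\mathbf E_\ast)$; I would state explicitly which one is invoked. Absent such an assumption, the defensible conclusion is the one proved above: $\{F(\mathbf X_k,\mathbf E_k)\}$ decreases monotonically to $F_\ast$, the iterates stay bounded, and every limit point of $\{(\mathbf X_k,\mathbf E_k)\}$ is stationary.
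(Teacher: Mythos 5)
Your proposal reconstructs, and in fact improves on, what the paper offers: the paper gives \emph{no} proof of this theorem at all, only the remark that it is ``easy to infer'' from Theorem~\ref{thm 1}. Your first paragraph is precisely the implied argument (inner FOA loop decreases $F(\cdot,\mathbf{E}_k)$ over the $\mathbf{X}$-block, the exact update \eqref{solveE} decreases the $\mathbf{E}$-block, chain the two), so on the monotonicity claim you and the paper take the same route. One technical point deserves emphasis: the accelerated update $\mathbf{X}_i=P_{\alpha_i}(\mathbf{Y}_i,\mathbf{E}_{i-1})$ is \emph{not} itself monotone --- the nonincreasing property in \eqref{fxk} is proved only for the unaccelerated step $P_\alpha(\mathbf{X}_{k-1})$ --- so your hedge ``run to convergence, or take the best iterate visited'' is genuinely needed rather than optional; if the inner loop exits after finitely many iterations via Stopping Conditions~I, the returned iterate need not have a lower objective value than its warm start, and the paper silently ignores this.

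On the final clause, your diagnosis is correct and is the more valuable part of the review: monotone, bounded function values together with a compact sublevel set give only that the limit set is nonempty (and, with your vanishing-increment estimate, that its points are stationary); convergence of the \emph{whole} sequence to a single $(\mathbf{X}_*,\mathbf{E}_*)$ does not follow on the nonconvex variety $\mathcal{M}_{\leq r}$ without an extra hypothesis (a Kurdyka--{\L}ojasiewicz inequality, isolatedness of the limit point, or strong convexity of the $\mathbf{E}$-subproblem). The paper asserts the stronger conclusion with no supporting argument, so the ``gap'' you identify is a gap in the paper's statement, not in your proof. Two further hypotheses you rely on --- boundedness below and coercivity of $F$ --- are also not among the standing assumptions for the abstract model \eqref{PE}; they hold for the concrete objective \eqref{ALMmodel} and should be made explicit if the theorem is intended in the general setting.
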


\section{The application of FOA in low rank representation on low rank matrix varieties}\label{Sec:IV}

Low rank representation model has been widely studied in recent years. In this section, we try to apply algorithm 2 to low rank representation model on varieties. The model is defined as follows:
\begin{equation}\label{LRR}
\begin{split}
  \min\limits_{\mathbf{X}, \mathbf{E}} &\ \ \|\mathbf{X}\|_* +\lambda\|\mathbf{E}\|_{21},\\
  s.t. &\ \  \mathbf{D}\mathbf{X}+\mathbf{E} =\mathbf{D},\ \ \mathbf{X}\in\mathcal{M}_{\leq r}.
\end{split}
\end{equation}
where $\|\mathbf X\|_*$ denotes the nuclear-norm of matrix $\mathbf{X} \in\mathbb{R}^{m\times n}$, $\lambda$ is a regularization parameter,  $\mathbf{D}$ is the data matrix, $\|\mathbf{E}\|_{21}$ is a regularizer (see Section \ref{Notation}),  and $\mathcal{M}_{\leq r}$ denotes Low rank matrix varieties of rank at most $r$.

In order to solve model \eqref{LRR}, we generalize the augmented Lagrangian methods (ALM) on Euclidean space to optimization on Low rank matrix varieties. The augmented Lagrangian function is  as follows
\begin{align} \label{ALF}
  F(\mathbf{X}, \mathbf{E}, \mathbf{U}) = &\|\mathbf{X}\|_* +\lambda\|\mathbf{E}\|_{21}+\big\langle\mathbf{U}, \mathbf{D}-\mathbf{D}
  \mathbf{X}-\mathbf{E} \big\rangle +\frac{\rho}{2}\| \mathbf{D}-\mathbf{D}
  \mathbf{X}-\mathbf{E}\|_F^2,
\end{align}
where $\mathbf{U}$ is the Lagrange multiplier, $\langle \cdot, \cdot\rangle$ is the inner product, and $\lambda$, $\rho>0$ are the penalty parameters. The augmented Lagrangian function consists of two parts:
\begin{itemize}
\item The first part is a smooth and differentiable function $f(\mathbf{X},\mathbf{E},\mathbf{U}) = \big\langle\mathbf{U}, \mathbf{D}-\mathbf{D}\mathbf{X}-\mathbf E \big\rangle+\frac{\rho}{2}\| \mathbf{D}-\mathbf{D}\mathbf{X}-\mathbf{E}\|_F^2.$
 \item The second is a continuous non-smooth functions  $g(\mathbf{X},\mathbf{E}) = \|\mathbf{X}\|_* +\lambda\|\mathbf{E}\|_{21}$.
\end{itemize}
Hence, model \eqref{LRR} is equivalent to following optimization problem
\begin{equation}\label{ALMmodel}
\min\limits_{\mathbf X\in\mathcal{M}_{\leq r}}  F(\mathbf{X}, \mathbf{E}, \mathbf{U})=\|\mathbf{X}\|_* +\lambda\|\mathbf{E}\|_{21}+\big\langle\mathbf{U}, \mathbf{D}-\mathbf{D}
  \mathbf{X}-\mathbf{E} \big\rangle +\frac{\rho}{2}\| \mathbf{D}-\mathbf{D}
  \mathbf{X}-\mathbf{E}\|_F^2,
\end{equation}
Since $\mathcal{M}_{\leq r}$ is a closure of the Riemannian submanifold  $\mathcal{M}_r$, it can be guaranteed that \eqref{ALMmodel} has optimal solution.
In spite of the non-smooth of set $\mathcal{M}_{\leq r}$, it has been shown in \cite{SchneiderUschmajew2015} that the tangent cone of $\mathcal{M}_{\leq r}$ at singular points  with rank$(\mathbf X)<r$ has a rather simple characterization and can be calculated easily (see section\ref{RV}). Hence the model \eqref{ALMmodel} can be directly optimized on $\mathcal{M}_{\leq r}$ \cite{UschmajewVandereycken2014} as section \ref{Sec:III}.

Subspace pursuit approach is an elegant optimization method on Riemannian manifold. In order to better optimize model \eqref{ALMmodel} , we propose a fast subspace pursuit method denoted as SP-RPRG(ALM) based on FOA.
In the fast subspace pursuit method, while  achieving the local optimum  $(\mathbf{X}_k^{r},\mathbf{E}_k)$ for the problem \eqref{ALMmodel} on $\mathcal{M}_{\leq r}$ ($r$ denotes most rank of $\mathbf X_k$, $r = 0, 1,\cdots$. $k$ is the $k$th alternating iteration, $k = 0,1,\cdots$.), we will start from the local optimum $(\mathbf{X}_k^{r},\mathbf{E}_k)$ as the initial $(\mathbf{X}_0^{r+l},\mathbf{E}_0)$ to warm-start the subsequent problem on low rank matrix varieties $\mathcal{M}_{\leq r+l}$.
Some related ingredients on low rank matrix verities are considered as follows (see section \ref{RV}):
\begin{itemize}
 \item Projection operator $\textrm{P}_{T_{\mathbf{X}}\mathcal{M}_{\leq r}}(\cdot)$ defined by \eqref{projection};

\item Gradient of function $f(\mathbf{X},\mathbf{E},\mathbf{U})$ on  low rank matrix verities $\mathcal{M}_{\leq r}$, \emph{i.e.}, $$\mathbf{grad}f(\mathbf{X},\mathbf{E},\mathbf{U}) = \textrm{P}_{T_{\mathbf{X}}\mathcal{M}_{\leq r}}(\nabla_{\mathbf X} f(\mathbf{X},\mathbf{E},\mathbf{U})),$$
 where $\nabla_{\mathbf X} f(\mathbf{X},\mathbf{E},\mathbf{U})) = -\mathbf D^T\mathbf{U}-\rho\mathbf D^T(\mathbf{D}-\mathbf D^T\mathbf{X}-\mathbf{E})$;

\item Retraction operator $\textrm{R}_{\mathbf{X}}(\boldsymbol{\xi})$ defined by \eqref{retraction}
 $$
 \textrm{R}_{\mathbf{X}}(\boldsymbol{\xi}) = \argmin\limits_{\mathbf{Y}\in \mathcal{M}_{\leq r}}\|\mathbf{X}+\boldsymbol{\xi}-\mathbf{Y}\|_F^2
 = \mathbf{U}\text{diag}(\boldsymbol{\sigma}_+)\mathbf{V}^T$$
where $\mathbf{X}+\boldsymbol{\xi} =\mathbf{U}\text{diag}(\boldsymbol{\sigma})\mathbf{V}^T$, $\boldsymbol{\sigma}_+$ is the first $\min \{ r,\text{rank}(\text{diag}(\boldsymbol{\sigma}))\}$ elements in the vector of $\boldsymbol{\sigma}$;
\item Lifting operator $L_{\mathbf X}(\mathbf Y)$ and the vector $L_{\mathbf X}(\mathbf Y)$ denoted by \eqref{Log}.
\end{itemize}

For the concrete function $F(\mathbf X,\mathbf E, \mathbf U)$ in \eqref{ALMmodel}, solving the problem
\begin{align*}
\mathbf X_k = \argmin\limits_{\mathbf X\in\mathcal{M}_{\leq r}}& F(\mathbf X, \mathbf E_{k-1}, \mathbf U_{k-1})\notag\\
 =\argmin\limits_{\mathbf X\in\mathcal{M}_{\leq r}} &\|\mathbf{X}\|_* + \big\langle\mathbf{U}_{k-1}, \mathbf{D}-\mathbf{D} \mathbf{X}-\mathbf{E}_{k-1}\big\rangle+\frac{\rho}{2}\| \mathbf{D}-\mathbf{D}\mathbf{X}-\mathbf{E}_{k-1}\|_F^2
\end{align*}

According to \eqref{computex2}, we can obtain
\begin{align}\label{computerx3}
  \mathbf X_k &=P_\alpha(\mathbf Y_k,\mathbf E_{k-1},\mathbf U_{k-1})= \mathbf{U}_+\text{diag}\big(\max\{\sigma_+-1/\alpha,0\}\big)\mathbf{V}_+^{\textrm{T}}.
\end{align}
where $\mathbf{U}_+\text{diag}(\sigma)_+\mathbf{V}_+^{\textrm{T}} = R_{\mathbf Y_k}(-\text{grad}f_{\mathbf{X}}(\mathbf Y_k,\mathbf E_k)/\alpha)$ (see \cite{TanShiGaoHengelXuXiao2015}), and $P_\alpha(\mathbf Y_k)$ can be efficiently computed in the sense that $R_{\mathbf Y_k}(-\text{grad}f_{\mathbf X}(\mathbf Y_k,\mathbf{E}_k)/\alpha) $ can be cheaply computed without expensive SVDs\cite{Vandereycken2013}.

Next, we directly update $\mathbf{E}$ for \eqref{ALMmodel} thought fixing the variables $\mathbf{X} = \mathbf X_k$ and $ \mathbf U= U_{k-1}$, we have
\begin{align*}
\mathbf E_k =\argmin\limits_{E\in\mathbb{R}^{m\times n}} & F(\mathbf X_k, \mathbf E, \mathbf U_{k-1})\notag\\
=\argmin\limits_{E\in\mathbb{R}^{m\times n}}& \lambda \|\mathbf{E}\|_{21} + \big\langle\mathbf{U}_{k-1}, \mathbf{D}-\mathbf{D} \mathbf{X}_{k}-\mathbf{E}\big\rangle\notag+\frac{\rho}{2}\| \mathbf{D}-\mathbf{D}\mathbf X_{k}-\mathbf E\|_F^2,
\end{align*}

let $\mathbf{W} = \mathbf{D}-\mathbf{D}\mathbf X_{k}+1/\rho \mathbf U_{k-1}$, then the closed-form solution
\begin{equation}\label{CE21}
\mathbf E_{ki} = \frac{\max\{\|\mathbf W_i\|_2-\frac{\lambda}{\rho},0\}}{\|\mathbf W_i\|_2}\mathbf W_i.
\end{equation}
where $\mathbf E_{ki}$ denotes the $i$th column of $\mathbf E_k, \forall  i$ \cite{TanShiGaoHengelXuXiao2015}.

The ALM with fast subspace pursuit on $\mathcal{M}_{\leq r}$ is described in detail in Algorithm \ref{Algorithm3}. And \textbf{Stopping conditions III}. In theory, we stop Algorithm \ref{Algorithm3} if $ r> n=\text{rank}(\mathbf{D})-1$.
\begin{algorithm}
\renewcommand{\algorithmicrequire}{\textbf{Input:}}
\renewcommand\algorithmicensure {\textbf{Output:} }
\caption{Fast subspace pursuit based on ALM ¡¡on  $\mathcal{M}_{\leq r}$ for model \eqref{LRR}} \label{Algorithm3}
\begin{algorithmic}[1]
\REQUIRE Parameters $ \lambda >0$, $\rho >0$, $0<\alpha< 1$, $\beta >1$, $l$ and $k$ is positive integers. Data matrices $\mathbf{D} \in \mathbb{R}^{m\times n}$.
\STATE   Initial $\mathbf{X}_0^0, \mathbf{E}_0^0$ and $\mathbf{U}_0^0$ are zero matrices, $r = l$, $r$ denotes the most rank of current matrix $\mathbf{X}^r$, $t_1=1$.
\FOR{$r =l:l:n$}
\FOR{$k = 1, 2,\cdots,N_2$}
\FOR{$i = 1,2,\cdots,N_1$}
 \STATE Fast optimization algorithm
 \begin{align}
 \mathbf{X}_i^r & = P_{\alpha_i}(\mathbf{Y}_i, \mathbf E_{i-1}),\notag\ \  \text{by} \ \ \eqref{computerx3}\\
  t_{i+1}& = \frac{1+\sqrt{1+4t_i^2}}{2} \notag\\
 \mathbf{Y}_{i+1} &= R_{\mathbf{X}_i}\big(-\frac{t_i-1}{t_{i+1}}S(\mathbf{X}_{i},\mathbf{X}_{i-1})\big)\notag
\end{align}
\STATE Terminate if stopping conditions I are achieved.
\STATE $\mathbf X_k^r = \mathbf X_i^r.$
\ENDFOR
\STATE Compute $\mathbf E_k$ by \eqref{CE21}.
\STATE Compute $\mathbf{U}_k = \mathbf{U}_{k-1} +\rho\big{(}\mathbf{D}-\mathbf{D}\mathbf{X}_{k}^r-\mathbf{E}_{k}\big{)}$
\STATE Compute $\rho = \min(\beta\rho,1e5)$.
\STATE Terminate if stopping conditions II are achieved.
\ENDFOR
\ENDFOR
\ENSURE  $(\mathbf{X}_k^{r},\mathbf{E}_k^{r})$.
\end{algorithmic}
\end{algorithm}

\section{Experimental Results and Analysis}\label{Sec:V}
In this section, we conduct several experiment to assess the proposed algorithms. We focus on the low rank matrix completion problem on a synthetic data and clustering on some public data sets for evaluating the performance of FOA in convergent rate and error rate.
All algorithms are coded in Matlab (R2014a) and implement on PC machine installed a 64\_bit operating system with an iterl(R) Core (TM) i7 CPU (3.4GHz with single-thread mode) and 4GB memory.
\subsection{Matrix Completion}
Let $\mathbf{A}\in \mathbb{R}^{m\times n}$ be a matrix whose elements are known only on a subset $\Omega$ of the complete set of indexes $\{1,\ldots m\}\times \{1,\ldots n\}$. The low-rank matrix completion problem consists of finding the matrix with lowest rank that agrees with $\mathbf{A}$ on $\Omega$. This problem can be consider as optimization with the objective function $F(\mathbf{X})=1/2\|P_{\Omega} (\mathbf{X}-\mathbf{A})\|_F^2$ , where the $P_{\Omega}$ denotes projection operator  onto ${\Omega}$, $P_{\Omega}(\mathbf{X}_{i,j}) = \mathbf{X}_{i,j}$ if $(i,j)\in\Omega$, otherwise $0$.

Following \cite{Vandereycken2013},  we generate ground-truth low-rank matrices $\mathbf{A}=\mathbf{L}\mathbf{R}\in\mathbb{R}^{m\times n}$ of rank $r$,
where $\mathbf{L}\in \mathbb{R}^{m\times r}$,$\mathbf{R}\in \mathbb{R}^{r\times n}$ is  rank r matrix generated randomly. The size of the complete set $\Omega$  is $r(m+n-r)$OS with  the oversampling factor OS $> 2$.
 \begin{figure}
   \begin{center}
{\includegraphics[width=3.0 in] {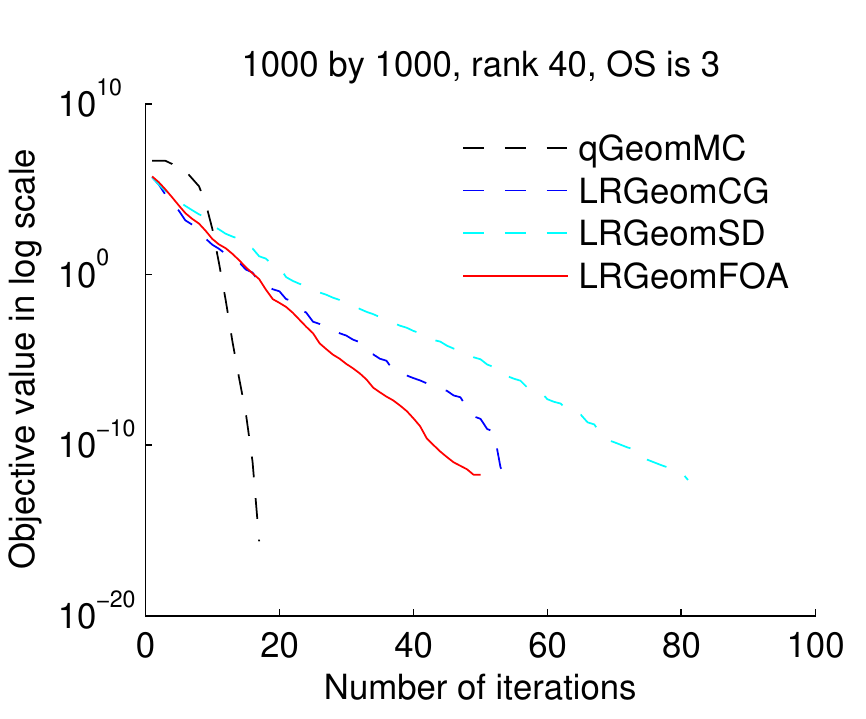}}
    \end{center}
    \caption{Objective function value w.r.t. iteration number}\label{MC1}
\end{figure}

\begin{figure}
   \begin{center}
{\includegraphics[width=3.0 in]{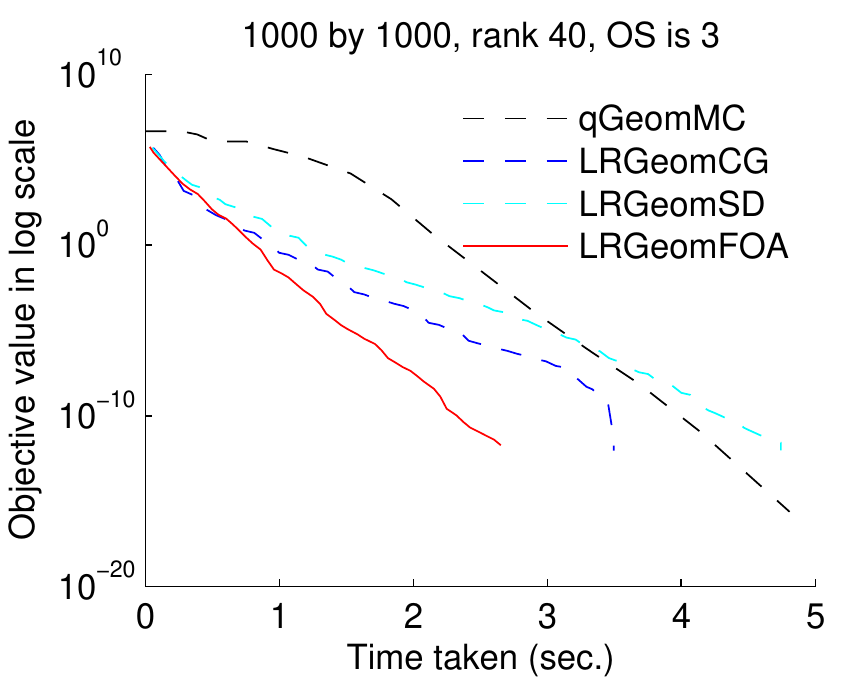}}
    \end{center}
    \caption{Objective function value w.r.t. time cost (second) }\label{MC2}
\end{figure}
\begin{figure}
   \begin{center}
{\includegraphics[width=3.0 in]{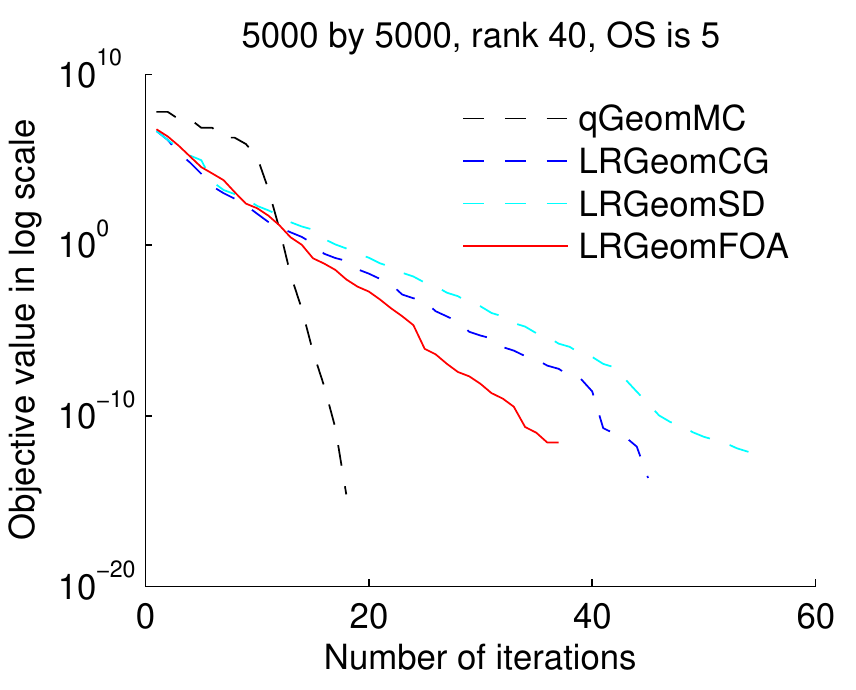}}
    \end{center}
    \caption{Objective function value w.r.t. iteration number}\label{MC3}
\end{figure}
\begin{figure}
   \begin{center}
{\includegraphics[width=3.0 in]{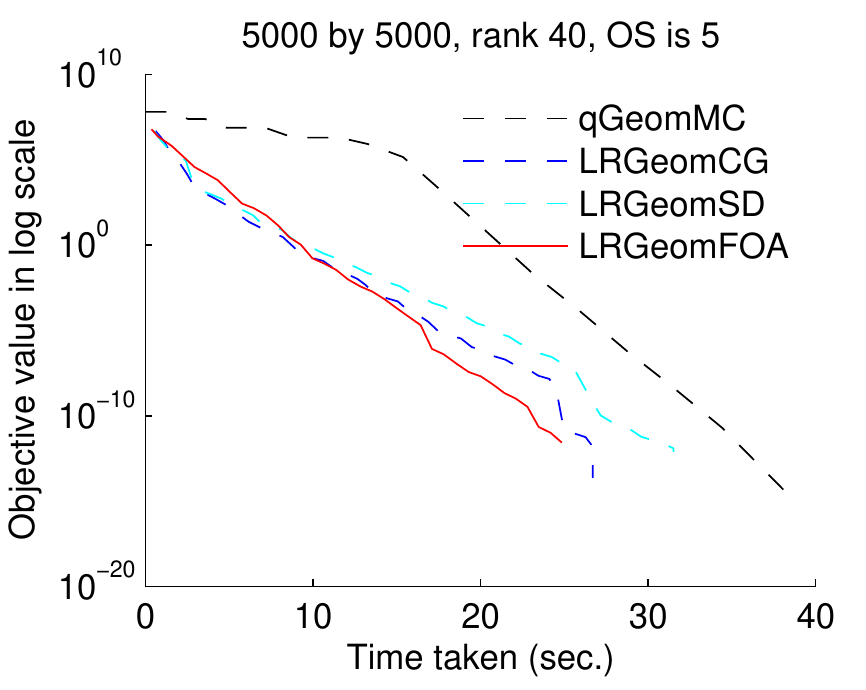}}
    \end{center}
    \caption{Objective function value w.r.t. time cost(second) }\label{MC4}
\end{figure}
 We test FOA on the matrix completion task and compare their optimization performance against qGeomMC\cite{MishraMeyerBachSepulchre2013}, LRGeomCG \cite{Vandereycken2013}, and LRGeomSD (steepest descent method) on fixed-rank Riemannian manifold.\footnote{qGeomMC and LRGeomCG are from http://bamdevmishra.com/ codes /fixedranklist/ and http://www.unige.ch/math/vandereycken/matrix  \_completion.html, respectively.} Fig.\ref{MC1} and Fig.\ref{MC3} give the needed iteration number while stop condition is satisfied with $\epsilon = 10^{-12}$ for different matrix size. Fig.\ref{MC2} and Fig.\ref{MC4} give the time cost in the same stop condition for different matrix size. We can find from these figures that qGeomMC need the least iteration number, ours is second. The convergent rate of our algorithm is the fastest. But qGeomMC employ the trust region method which uses the second-order function information. So the proposed FOA has the advantage in convergence rapidity over existing optimization algorithm with first order function information on Riemannian manifold.

\begin{figure}[H]
   \begin{center}
{\includegraphics[width=3.0 in]{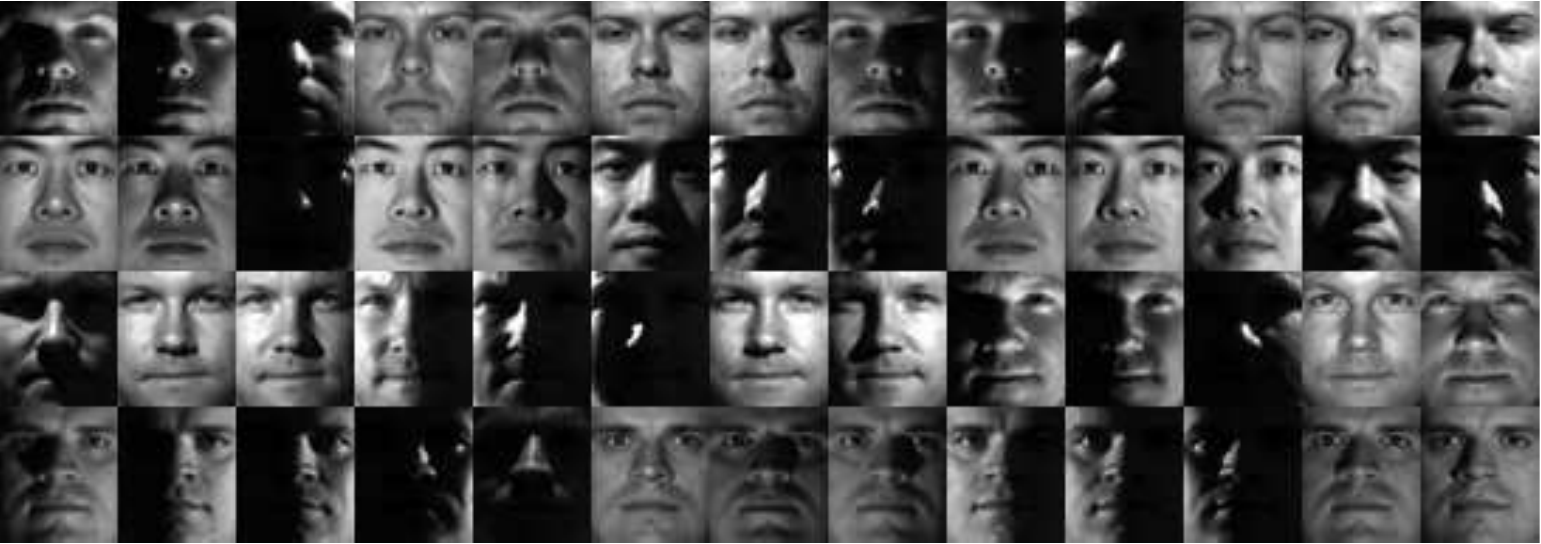}}
    \end{center}
    \caption{Some sample images from the Extended Yale B face databases}\label{Yalesam}
\end{figure}

\begin{figure}[H]
   \begin{center}
{\includegraphics[width=3.0 in]{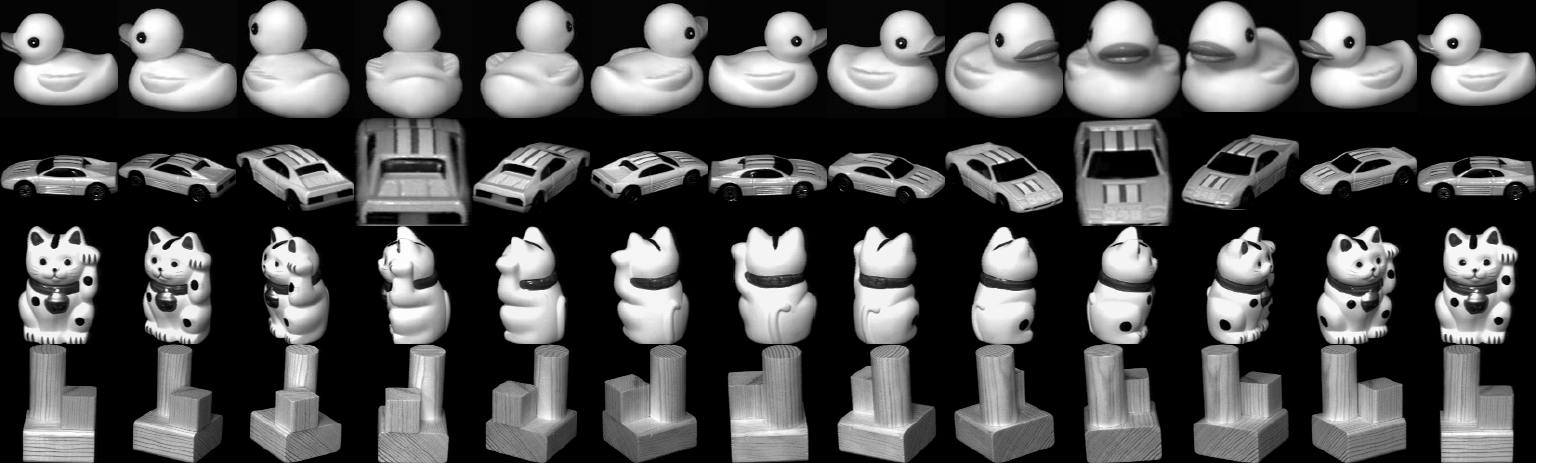}}
    \end{center}
    \caption{Some sample images from the COIL-20 databases}\label{coil}
\end{figure}

\subsection{Clustering on Extended Yale B  and COIL-20 Database}

In this subsection, we apply FOA to model \eqref{ALMmodel} to evaluate its performance in calculating time and clustering error by compared with existing optimization method on Riemannian manifold. The compared algorithm include LRR\cite{LiuLinYanSunYuMa2013}(solving model \eqref{LRR} on euclidean space), SP-RPRG\cite{TanShiGaoHengelXuXiao2015}(solving model \eqref{LRR} based on low rank matrix varieties) and SP-RPRG(ALM)(solving model \eqref{LRR} based on augmented Lagrange method on low rank matrix vatieties). For SP-RPRG and SP-RPRG(ALM), we give respectively optimal solution by conjugate gradient method and FOA method, and compare their time cost. We set $\gamma(\mathbf{E}) = \|\mathbf{E}\|_{21}$, $\mathcal{A}(\mathbf{X}) = \mathbf{D}\mathbf{X}$,  $\mathcal{B}(\mathbf{E})=\mathbf{E}$ for model \eqref{LRR}.

Two public databases are used for evaluation. 
One is the extended Yale B face database which contains 2,414 frontal face images of 38 individuals. The images were captured under different poses and illumination conditions. Fig. \ref{Yalesam} shows some sample images. To reduce the computational cost and the memory requirements, all face images are resized from $192\times 168$ pixels to $48\times 42$ pixels and then vectorized as 2016-dimensional vectors.

Another database is Columbia Object Image Library (COIL-20) which includes 1,440 gray-scale images of 20 objects (72 images per object). The objects have a wide variety in geometric and reflectance characteristics. Each image is clipped out from the black background using a rectangular bounding box and resized from   $128\times 128$ pixels to $32\times 32$ pixels, then shaped as a 1024-dimensional gray-level intensity feature. Fig. \ref{coil} shows some sample images.


In the clustering experiments on extended Yale B database, we initialize parameters  $\lambda =0.001$, $\rho = 0.5$ for SP-RPRG(ALM), $\lambda = 0.1$ for LRR \footnote{LRR code is from http://www.cis.pku.edu.cn/faculty/vision/zlin/zlin.htm} \cite{LiuLinYanSunYuMa2013}, and $\lambda =0.01$, $\rho = 1$ for SP-RPRG \cite{TanShiGaoHengelXuXiao2015}. The first $c = \{2, 3, 5, 8,10\}$ classes are select for experiments, each class contains 64 images.
\begin{figure}[H]
   \begin{center}
{\includegraphics[width=3.0 in]{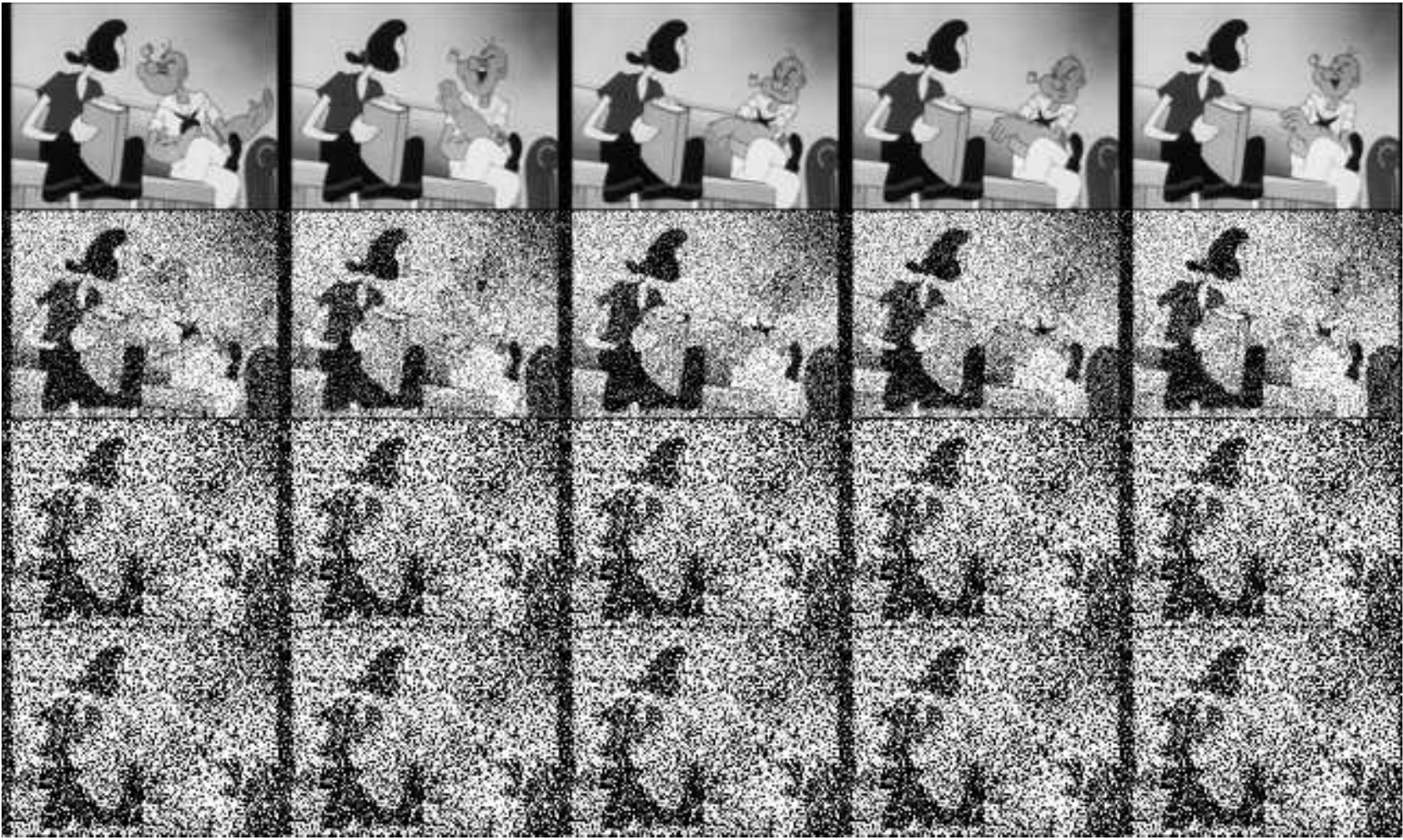}}
    \end{center}
    \caption{Some video frame examples from the Video 2  with 0, 20\%, 50\% and 80\%  Gaussian noise respectively.}\label{Video2}
\end{figure}

We compare face clustering error rate for LRR, SP-RPRG and SP-RPRG(ALM), and time cost using our FOA and conjugate gradient (CG) method for SP-RPRG and SP-RPRG  (ALM) with different $c$ value. Table \ref{Yale} gives the experimental results. We can observe from these results that the  proposed SP-LRR(ALM) has the lowest error rate for each $c$ value. FOA applied in SP-RPRG and SP-RPRG(ALM) spend less time than conjugate gradient method to achieve the same accuracy.

\begin{table*}
  \centering
   \begin{tabular}{|c|c|c|c|c|c|c|c|}
     \hline
     \multirow{2}*{Class}   &\multicolumn{1}{c|}{LRR}  &\multicolumn{3}{c|}{SP-RPRG}&\multicolumn{3}{c|}{SP-RPRG(ALM)}  \\
     \cline{2-8}
             & error& error& time(CG) & time(FOA) & error  & time(CG) & time(FOA)\\
  \hline
  \emph{2} & 2.34& 0   &71.3&14.3  &$\mathbf{0}$  & 9.0 & 3.9 \\
    \hline
        3 &3.65 & 3.13 &185.4 &56.2 &$\mathbf{1.56}$  &42.8 & 30.2\\
  \hline
          5 &4.69 & 2.50 & 581.5&187.5 &$\mathbf{2.50}$ &163.0& 160.5\\
  \hline
            8 &13.40  & 5.66 &2339.2 & 943.2&$\mathbf{3.71}$ &1027.1 &916.3 \\
   \hline
            10 &24.02  & 7.50  & 14826.9& 2064.3  &$\mathbf{3.75}$ &870 & 844.0 \\
  \hline
\end{tabular}
    \caption{Face clustering error (\%) and time(second) cost on Extended Yale B database. }\label{Yale}
\end{table*}

\begin{table*}
  \centering
   \begin{tabular}{|c|c|c|c|c|c|c|c|c|c|c|}
     \hline
      \multirow{3}*{\text{Class}}   &\multicolumn{2}{c|}{LRR}  &\multicolumn{4}{c|}{SP-RPRG}&\multicolumn{4}{c|}{SP-RPRG(ALM)}  \\

     \cline{2-11}
             & ave err  & std & ave err  & std & time & time & ave err  & std & time & time\\
             & &  &&&  (CG)& (FOA)& &   &  (CG)&(FOA)\\

    \hline
     2 & 4.08  & 11.49 &2.75    &8.11  & 0.8&0.4 &\textbf{1.94}  & 6.91&0.8 & 0.5 \\
     \hline
       3 & 6.89  & 12.81 & 4.57  & 8.00 &3.1 &1.3 & \textbf{2.65} &7.61 &3.6 &1.1  \\
     \hline
      4 & 10.92  & 13.07 & 8.46  & 9.95 &16.8 &4.0 & \textbf{4.79}&7.23 &18.6 & 7.6 \\
     \hline
   5 & 13.10   & 13.58 &9.69    &9.57  &31.9 &10.9 &\textbf{6.09}  & 6.98& 31.8& 12.3 \\
   \hline
      6 &19.67    & 13.12 & 11.98   &9.40  &35.1 & 26.2&\textbf{6.26}  & 7.03& 62.4& 39.7 \\
     \hline
       7 & 24.44   & 12.25 & 14.11   & 9.53 &63.5 & 47.3&\textbf{ 8.40}&8.19& 73.1 & 63.7 \\
     \hline
        8 & 25.02  & 11.57 & 14.87   &7.43  &91.6 &73.4 & \textbf{9.15} & 6.68&106.7  &92.1 \\
     \hline
     9 & 30.88   & 9.40 & 15.86   & 5.79 & 125.0&111.0 & \textbf{11.74} &6.15 & 132.6& 126.2 \\
     \hline
     10& 31.44   & 9.93 & 20.96    &7.43 &171.5 &165.2 &\textbf{13.57}  & 7.18&185.0 & 149.2 \\
     \hline
     11 & 33.69   & 8.56& 20.60     & 6.27& 260.1 &250.8 &\textbf{15.23}  & 6.76&270.0&245.9 \\
     \hline
   \end{tabular}
  \caption{The object clustering average error (\%), standard  deviation, and average time(second) cost and on COIL-20.}\label{Coil}
  \end{table*}

In the clustering experiments on COIL-20 database, we set the initialization parameters $\lambda =0.001$, $\rho = 1$ for SP-LRR(ALM), $\lambda = 0.1$ for LRR   and $\lambda = 0.001$, $\rho =2$ for SP-LRR. In experiment, we randomly select  $c$ (ranging from 2 to 11) classes from the 20 classes and 36 samples for each class in the whole COIL-20 databases. For each value $c$, we run 50 times using different randomly chosen classes and samples. The results are shown in Table \ref{Coil}. From this table we can see that our proposed SP-LRR(ALM) has the lowest average clustering error. FOA applied in SP-LRR and SP-LRR(ALM) spend less time than conjugate gradient method to achieve the same accuracy.This results demonstrates that the SP-LRR(ALM)  significantly enhances the accuracy and FOA reduce the time cos during the affinity matrix construction.


\subsection{Clustering on Video Scene Database}

This experiment test the performance of FOA based on model \eqref{ALMmodel} on video scene database. The compared optimization method and performance are same as previous experiments.

The aim of experiment is to segment individual scenes from a video sequence. The video sequences are drawn from two short animations freely available from the Internet Archive \footnote{http://archive.org/.}, this are same as the data used in \cite{TierneyGaoGuo2014}. The sequence is around 10 seconds in length (approximately 300 frames) and contains three scenes each. To segmented scenes  according to significant translation and morphing of objects  and sometimes

camera or perspective changes, as a result, there are 19 and 24 clips for Video 1 and Video 2 respectively, each clip include 3 scenes. Scene changes (or keyframes) were collected and labeled by hand to form ground truth data. The pre-processing of a sequence consists of converting colour video to grayscale and down sampling to a resolution of $129\times 96$. 
The video clips were corrupted with various magnitudes of Gaussian noise to evaluate the clustering performance of different methods. Fig.\ref{Video2} show some frames from video 2.

Each frame of clips is vectorized as $\mathbf X_i\in \mathbb R^{12384}$  and concatenated with consecutive frames to form the samples $\mathbf X\in \mathbb R^{12384\times 300}$ for segmentation. The model parameters are set $\lambda = 0.0002$ and $\rho = 2$ in SP-RPRG(ALM), $\lambda = 0.01$  for LRR, $\lambda = 0.01$ and $\rho =1 $ for SP-RPRG.

  \begin{table*}[htbp]
  \centering
  \begin{tabular}{|c*{8}{c|}}
    \hline
\multicolumn{2}{|c}{\multirow{4}*{Gaussian noise}} &  \multicolumn{7}{|c|}{Video 1} \\
        \cline{3-9}\multicolumn{2}{|c|}{}  & LRR &  \multicolumn{3}{|c|}{SP-RPRG} & \multicolumn{3}{|c|}{SP-RPRG(ALM)}   \\
        \cline{3-9}\multicolumn{2}{|c|}{} & error &error& time&time&error&time&time\\
          \multicolumn{2}{|c|}{} & &    &(CG)& (FOA)&    &  (CG)&(FOA)\\
    \hline
\multirow{5}*{0\%}
      &min& 0.00(7)&  0.00(11)&& & 0.00(\textbf{12})&&\\
      &max &48.28& 29.33& && 25.59&&\\
      &med & 22.73&0.00&&& 0.00   &&  \\
      &mean &18.82 & 9.11 &157.1 &113.8&\textbf{5.49} &49.4&21.9\\
       &std &17.63  &  13.40&& &  9.42&&\\
      \hline
\multirow{5}*{20\%}
      &min& 0.00(6) &0.00(11)&& & 0.00(\textbf{12}) && \\
      &max &58.33 &37.93&& &31.03&&\\
      &med & 27.16  & 0.00&&& 0.00 &&  \\
      &mean &20.18  & 9.38&155.8&89.9& \textbf{7.55} &50.5&18.7\\
       &std &17.36 & 13.51&&& 11.18 &&\\
         \hline
 \multirow{5}*{50\%}
      &min&0.00 (6)  &   0.00(\textbf{12})&&  &0.00(18)&&\\
      &max &41.98 & 25.86&& &  40.69& &\\
      &med &   13.63& 4.85 &&& 0.00    &&\\
      &mean & 16.29 &12.77&116.2& 82.2& \textbf{6.73}  &51.9&16.4\\
       &std & 15.84  &15.34 &&&9.98 &&\\
      \hline
 \multirow{5}*{80\%}
      &min&  0.00(5) &0.00(9)&&&  0.00(\textbf{10}) &&\\
      &max & 52.64  & 53.41&&&26.67 &&\\
      &med & 20.36 &2.20&&&0.00    & & \\
      &mean &   20.28 &16.39&107.6&91.7&\textbf{7.83}&56.8&18.6\\
       &std &18.80  &20.00&& & 10.34  &&\\
      \hline
  \end{tabular}
  \caption{Average time(second) cost and clustering error rates (\%) for the corrupted video 1 with 0, 20\%, 50\% and 80\% Gaussian noise, lower is better.  Numbers in brackets indicate how many times clustering was perfect, i.e. zero error.}\label{tVideo1}
  \end{table*}

  \begin{table*}[htbp]
  \centering
  \begin{tabular}{|c*{8}{c|}}
    \hline
\multicolumn{2}{|c}{\multirow{4}*{Gaussian noise}} &  \multicolumn{7}{|c|}{Video 2} \\
        \cline{3-9}\multicolumn{2}{|c|}{}  & LRR &  \multicolumn{3}{|c|}{SP-RPRG} & \multicolumn{3}{|c|}{SP-RPRG(ALM)}   \\
        \cline{3-9}\multicolumn{2}{|c|}{} & error &error& time&time&error&time&time\\
          \multicolumn{2}{|c|}{} & &    &(CG)& (FOA)&    &  (CG)&(FOA)\\
    \hline
\multirow{5}*{0\%}
      &min &0.00(22)&  0.00(22)& &&0.00(\textbf{22})&&\\
      &max  &28.29&  16.33&  &&3.98&&\\
      &med &0.00&   0.00&&&0 .00&&\\
      &mean & 1.43  &1.20 &56.3&41.6& \textbf{0.19}&17.7&17.8\\
       &std &5.85 &4.10&& & 0.82&& \\
      \hline
\multirow{5}*{20\%}
      &min&0.00(14) &0.00(22)&&& 0.00(\textbf{22})&&\\
      &max &57.74  &0.59&&&  0.59&&\\
      &med   & 0.00  & 0.00& &&0.00&&\\
      &mean &12.07 &0.04 &51.5&39.4& \textbf{0.04}&17.6&17.3\\
       &std & 16.42 &0.14&&& 0.14&&\\
         \hline
 \multirow{5}*{50\%}
      &min&1.78& 0(21)&&&  0(22)&&\\
      &max &0.59  &0.59&&&46.33 && \\
      &med  & 0.00 & 0.00&&& 0.00&&\\
      &mean  &4.99 &0.06&45.9&37.2& \textbf{0.04}&17.7&13.3\\
       &std  & 11.80 &0.16 &&&0.14&& \\
      \hline
 \multirow{5}*{80\%}
      &min & 0.00(15) &0.00(18)& &&0.00(\textbf{22})&&\\
      &max &54.83  &22.58&&&0.59&& \\
      &med &0.00 & 0.00&&& 0.00&&\\
      &mean &9.62 &3.16&40.1&34.8&\textbf{0.004} &17.8&10.6\\
       &std  &16.94 &7.07&& &0.14&& \\
      \hline
  \end{tabular}
  \caption{Average time(second) cost and clustering error rates (\%) for the corrupted video 2 with 0, 20\%, 50\% and 80\% Gaussian noise, lower is better. Numbers in brackets indicate how many times clustering was perfect, i.e. zero error.}\label{tVideo2}

  \end{table*}

TABLE \ref{tVideo1} and \ref{tVideo2} list the segmentation results and average time cost by our FOA and conjugate gradient (CG) method for SP-RPRG and  SP-RPRG(ALM). These results are the average segmentation value of all clips for each Video.  We can see from these results that  SP-RPRG (ALM) has consistently low average error rates and standard deviation with various magnitudes of Gaussian noise. In particular, SP-RPRG (ALM)  has not been influenced by Gaussian noise in the video 2. In most case, the FOA has fast convergent rate for SP-RPRG and SP-RPRG(ALM).

\section{Conclusions}\label{Sec:VI}
This paper propose a fast optimization algorithm (FOA) on a Riemannian manifold for a class of composite function, and prove its convergence. Different from most optimization methods on Riemannian manifold, our algorithm use only first-order function information, but has the convergence rate $\mathcal{O}(k^{-2})$. Experiments on some data set show the optimization performance  largely outperform existing method in terms of convergence rate and accuracy. Besides, we transform low rank representation model into optimization problem on varieties based on augmented Lagrange approach, then fast subspace pursuit methods based on FOA is applied to solve  optimization problem. 
Extensive experiments results demonstrate the superiority of our  proposed ALM with fast subspace purist approach .

\begin{acknowledgements}
The research project is supported by the Australian Research Council (ARC) through the grant DP140102270 and also partially supported by National Natural Science Foundation of China under Grant No. 61390510, 61133003, 61370119, 61171169 and 61227004.
\end{acknowledgements}




\section*{Appendix A}\label{Sec:VIII}
The requirement that the vector transport $\mathcal{T}_s$ is isometric means that, for all $\mathbf{X},\mathbf{Y}\in \mathcal{M}$ and all $\xi_{\mathbf{X}}, \zeta_{\mathbf{X}}\in \mathcal{T}_{\mathbf{X}}\mathcal{M}$, the equation $\langle\mathcal{T}_{\mathbf{X}\rightarrow\mathbf{Y}}\xi_{\mathbf{X}} ,\mathcal{T}_{\mathbf{X}\rightarrow\mathbf{Y}}\zeta_{\mathbf{X}}\rangle = \langle \xi_{\mathbf{X}}, \zeta_{\mathbf{X}}\rangle$ holds.To prove Theorem \ref{thm 1}, we give some lemmas in following.

 \begin{lemma}\label{lem 2}
  Let $a_k,b_k$ be positive real sequences satisfying
\begin{equation}\label{akbk}
  a_k- a_{k+1}\geq b_{k+1}-b_{k}, \forall k\geq 1,
\end{equation}
with $a_1 +b_1\leq c,$ $ c>0$.
Then $a_k\leq c$ for every $k\geq 1$.
\end{lemma}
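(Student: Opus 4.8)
The plan is to exploit the telescoping structure hidden in \eqref{akbk}. First I would rewrite the hypothesis $a_k - a_{k+1} \geq b_{k+1} - b_k$ by regrouping terms across the inequality, obtaining $a_{k+1} + b_{k+1} \leq a_k + b_k$ for every $k \geq 1$. This says precisely that the auxiliary sequence $c_k := a_k + b_k$ is non-increasing. No structure beyond the given one-step inequality is needed for this reduction.

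Next I would iterate this monotonicity from index $1$ up to an arbitrary $k$, i.e. $c_k \leq c_{k-1} \leq \cdots \leq c_1 = a_1 + b_1$, and then invoke the standing assumption $a_1 + b_1 \leq c$ to conclude $a_k + b_k \leq c$ for all $k \geq 1$. (Equivalently, one may simply sum \eqref{akbk} over $j = 1, \dots, k-1$, the intermediate terms cancelling, to get $a_1 - a_k \geq b_k - b_1$, which rearranges to the same bound $a_k + b_k \leq a_1 + b_1 \leq c$; I would use whichever phrasing reads more cleanly.)

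Finally, since both sequences are positive, in particular $b_k > 0$, I discard the nonnegative term $b_k$ to obtain $a_k \leq a_k + b_k \leq c$, which is exactly the assertion. There is essentially no obstacle here: the only thing to notice is that the given inequality is just a disguised statement that $a_k + b_k$ decreases, after which the conclusion is immediate from the initial bound and positivity of $b_k$. The positivity hypothesis on $a_k$ itself is not even used, only that of $b_k$.
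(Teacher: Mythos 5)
Your proposal is correct and follows essentially the same route as the paper: both define $c_k = a_k + b_k$, observe that \eqref{akbk} says $c_k$ is non-increasing, deduce $a_k + b_k \leq a_1 + b_1 \leq c$, and drop the positive term $b_k$. Your added remark that positivity of $a_k$ is not actually needed is a fair (minor) sharpening of the statement.
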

 \begin{proof}
Let $c_k = a_k+ b_k$, we can obtain from \eqref{akbk}
$$a_k+b_k\geq a_{k+1}+b_{k+1}$$
i.e. $c_k\geq c_{k+1}$, so $c_k$ is a decreasing sequence. Then
$$c_k = a_k+b_k \leq c_1= a_1+b_1 \leq c,\ \ \forall  k\geq 1.$$
Since $a_k, b_k$ is positive sequences, we can  conclude
$$a_k \leq c,\ \ \forall \ \ k\geq 1.$$
\end{proof}

\begin{lemma}\label{lem 3}
 Let positive sequence $t_k$ generated in Algorithm 1 via \eqref{fastiterative1} with $t_1 = 1$. Then $t_k\geq (k+1)/2$ for all $k\geq 1$.
 \end{lemma}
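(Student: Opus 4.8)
The plan is to establish the bound by induction on $k$, using the closed form of the recursion $t_{k+1} = \tfrac{1+\sqrt{1+4t_k^2}}{2}$ together with the monotonicity of the map $\varphi(t) := \tfrac{1+\sqrt{1+4t^2}}{2}$. First I would dispose of the base case $k=1$: since $t_1 = 1$ and $(1+1)/2 = 1$, the claimed inequality $t_k \geq (k+1)/2$ holds with equality at $k=1$.

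For the inductive step I would assume $t_k \geq (k+1)/2$ and apply $\varphi$. Because $\varphi$ is nondecreasing on $[0,\infty)$ and the iterates are positive, this gives
\begin{equation*}
t_{k+1} = \varphi(t_k) \geq \varphi\!\left(\tfrac{k+1}{2}\right) = \frac{1 + \sqrt{1 + (k+1)^2}}{2}.
\end{equation*}
Then the elementary estimate $\sqrt{1 + (k+1)^2} \geq k+1$ yields $t_{k+1} \geq \tfrac{1 + (k+1)}{2} = \tfrac{(k+1)+1}{2}$, which is exactly the assertion at index $k+1$, closing the induction.

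The argument is essentially routine; the only points needing a line of care are (i) confirming $t_k > 0$ for all $k$ — immediate, since $\varphi(t) \geq 1 > 0$ for every $t$, so in particular $t_k \geq 1$ and the monotonicity of $\varphi$ on $[0,\infty)$ may be invoked — and (ii) the trivial inequality $\sqrt{1+x^2} \geq x$ used to drop the ``$+1$'' under the radical. I do not expect any genuine obstacle here. (An alternative is to note $t_{k+1}^2 - t_{k+1} = t_k^2$ and telescope, but the direct induction above is cleaner and delivers the stated constant $(k+1)/2$ at once.)
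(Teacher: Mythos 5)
Your proof is correct and follows essentially the same route as the paper: induction on $k$, using monotonicity of $t\mapsto\frac{1+\sqrt{1+4t^2}}{2}$ and the bound $\sqrt{1+(k+1)^2}\geq k+1$. Your write-up is in fact a bit cleaner — the paper's final displayed inequality contains a typo (an extraneous square on $(k+1+1)$) that your version avoids.
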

\begin{proof}
1) When $k = 1$, $t_1=1\geq \frac{1+1}{2} =1$.\\
2) Suppose that when $k = n$, the conclusion is correct. Then, while $k = n+1$
\begin{equation}
t_{k+1} = \frac{1+\sqrt{1+4t_k^2}}{2} \geq \frac{1+\sqrt{1+4(\frac{k+1}{2})^2}}{2}= \frac{1+\sqrt{1+(k+1)^2}}{2}\geq \frac{(k+1+1)^2}{2}
\end{equation}
According to 1), 2), conclusion is correct.
\end{proof}

\begin{lemma}\label{lem 4}
 For any $\mathbf{Y}\in\mathcal{M}$, the point $\mathbf{X} = p_{\alpha}(\mathbf{Y})$ is a local minimizer of \eqref{oqa} if and only if there exists $\gamma(\mathbf{Y})\in\partial g(\mathbf{X})$, such that \cite{MishraMeyerBachSepulchre2013}
$$
  \mathcal T_{\mathbf{Y}\rightarrow{P_\alpha(\mathbf{Y})}}\big(\mathrm{grad} f(\mathbf{Y})+\alpha L_{\mathbf{Y}}(P_{\alpha}(\mathbf{Y})\big) +\gamma(\mathbf{Y})=\mathbf{0}.
$$
where the vector transport $\mathcal{T}_s$ is isometric means.
\end{lemma}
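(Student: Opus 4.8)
The plan is to read the auxiliary problem \eqref{oqa} as a composite minimization on $\mathcal{M}$ and then invoke Lemma~\ref{lemma0}. Write $Q_\alpha(\mathbf{X},\mathbf{Y}) = h_{\mathbf{Y}}(\mathbf{X}) + g(\mathbf{X})$ with
\begin{equation*}
h_{\mathbf{Y}}(\mathbf{X}) := f(\mathbf{Y}) + \langle\mathrm{grad}f(\mathbf{Y}), L_{\mathbf{Y}}(\mathbf{X})\rangle_{\mathbf{Y}} + \tfrac{\alpha}{2}\|L_{\mathbf{Y}}(\mathbf{X})\|_{\mathbf{Y}}^2,
\end{equation*}
which is smooth in $\mathbf{X}$ on the neighbourhood of $\mathbf{Y}$ on which $L_{\mathbf{Y}}$ is a diffeomorphism. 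Since $P_\alpha(\mathbf{Y})$ is by construction the minimizer of $Q_\alpha(\cdot,\mathbf{Y}) = h_{\mathbf{Y}} + g$, Lemma~\ref{lemma0} applied with $h_{\mathbf{Y}}$ playing the role of $f$ states that $\mathbf{X}=P_\alpha(\mathbf{Y})$ is a local minimizer if and only if there is a tangent vector $\gamma(\mathbf{Y})\in\partial g(\mathbf{X})$ with $\mathrm{grad}\,h_{\mathbf{Y}}(\mathbf{X}) + \gamma(\mathbf{Y}) = \mathbf{0}$ in $T_{\mathbf{X}}\mathcal{M}$. Thus the whole claim reduces to evaluating $\mathrm{grad}\,h_{\mathbf{Y}}$ at the point $P_\alpha(\mathbf{Y})$ and recognizing the vector transport in it.

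For that I would differentiate $h_{\mathbf{Y}}$ along an arbitrary smooth curve $c$ with $c(0)=\mathbf{X}$, $c'(0)=\boldsymbol{\xi}\in T_{\mathbf{X}}\mathcal{M}$. Since the inner product $\langle\cdot,\cdot\rangle_{\mathbf{Y}}$ does not move with $\mathbf{X}$, the chain rule gives
\begin{equation*}
\mathrm{D}h_{\mathbf{Y}}(\mathbf{X})[\boldsymbol{\xi}] = \big\langle \mathrm{grad}f(\mathbf{Y}) + \alpha L_{\mathbf{Y}}(\mathbf{X}),\ \mathrm{D}L_{\mathbf{Y}}(\mathbf{X})[\boldsymbol{\xi}] \big\rangle_{\mathbf{Y}},
\end{equation*}
where $\mathrm{D}L_{\mathbf{Y}}(\mathbf{X}):T_{\mathbf{X}}\mathcal{M}\to T_{\mathbf{Y}}\mathcal{M}$ is the differential of the lifting operator. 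Moving this operator to the other slot of the inner product via its adjoint for the two Riemannian metrics yields $\mathrm{grad}\,h_{\mathbf{Y}}(\mathbf{X}) = \big(\mathrm{D}L_{\mathbf{Y}}(\mathbf{X})\big)^{*}\big(\mathrm{grad}f(\mathbf{Y}) + \alpha L_{\mathbf{Y}}(\mathbf{X})\big)$.

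It then remains to identify $\big(\mathrm{D}L_{\mathbf{Y}}(\mathbf{X})\big)^{*}$ with the vector transport. Because $L_{\mathbf{Y}}$ is the local inverse of the retraction $R_{\mathbf{Y}}$, the inverse function theorem gives $\mathrm{D}L_{\mathbf{Y}}(\mathbf{X}) = \big(\mathrm{D}R_{\mathbf{Y}}(L_{\mathbf{Y}}(\mathbf{X}))\big)^{-1}$, and $\mathcal{T}_{\mathbf{Y}\rightarrow\mathbf{X}} = \mathrm{D}R_{\mathbf{Y}}(L_{\mathbf{Y}}(\mathbf{X}))$ is precisely the retraction-induced vector transport, so $\mathrm{D}L_{\mathbf{Y}}(\mathbf{X}) = \mathcal{T}_{\mathbf{Y}\rightarrow\mathbf{X}}^{-1}$. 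Now using the standing assumption that $\mathcal{T}_s$ is isometric — an isometry between inner-product spaces has adjoint equal to inverse — we obtain $\big(\mathrm{D}L_{\mathbf{Y}}(\mathbf{X})\big)^{*} = \big(\mathcal{T}_{\mathbf{Y}\rightarrow\mathbf{X}}^{-1}\big)^{*} = \mathcal{T}_{\mathbf{Y}\rightarrow\mathbf{X}}$. Substituting $\mathbf{X}=P_\alpha(\mathbf{Y})$ gives $\mathrm{grad}\,h_{\mathbf{Y}}(P_\alpha(\mathbf{Y})) = \mathcal{T}_{\mathbf{Y}\rightarrow P_\alpha(\mathbf{Y})}\big(\mathrm{grad}f(\mathbf{Y}) + \alpha L_{\mathbf{Y}}(P_\alpha(\mathbf{Y}))\big)$, and plugging this into the optimality condition of the first paragraph produces exactly
\begin{equation*}
\mathcal{T}_{\mathbf{Y}\rightarrow P_\alpha(\mathbf{Y})}\big(\mathrm{grad}f(\mathbf{Y}) + \alpha L_{\mathbf{Y}}(P_\alpha(\mathbf{Y}))\big) + \gamma(\mathbf{Y}) = \mathbf{0}.
\end{equation*}

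I expect the genuine obstacle to be this last identification, $\big(\mathrm{D}L_{\mathbf{Y}}\big)^{*} = \mathcal{T}_{\mathbf{Y}\rightarrow\cdot}$: one must be careful that $L_{\mathbf{Y}}$ is a bona fide local diffeomorphic inverse of $R_{\mathbf{Y}}$ (so the inverse-function identity for $\mathrm{D}L_{\mathbf{Y}}$ is valid), that the vector transport appearing in the statement is exactly the one attached to $R$, and — the essential use of the hypothesis — that isometry of $\mathcal{T}_s$ converts ``inverse'' into ``adjoint.'' A secondary point to pin down is the meaning of $\partial g(\mathbf{X})$: in Lemma~\ref{lemma0} the subgradient is a tangent vector, so $\gamma(\mathbf{Y})$ should be read as an element of the Riemannian subdifferential $\mathrm{P}_{T_{\mathbf{X}}\mathcal{M}}\,\partial g(\mathbf{X})$, and one should check that the ``if and only if'' in Lemma~\ref{lemma0} still applies when its smooth summand is $h_{\mathbf{Y}}$ rather than the original $f$ — which it does, since $h_{\mathbf{Y}}$ is $C^1$ near $\mathbf{Y}$ and that is all the first-order argument requires.
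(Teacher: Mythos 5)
Your proposal is correct and follows essentially the same route as the paper: reduce to the first-order optimality condition (Lemma~\ref{lemma0}) for the composite model function $Q_\alpha(\cdot,\mathbf{Y})$ and identify the Riemannian gradient of its smooth part at $P_\alpha(\mathbf{Y})$ with $\mathcal{T}_{\mathbf{Y}\rightarrow P_\alpha(\mathbf{Y})}\big(\mathrm{grad}f(\mathbf{Y})+\alpha L_{\mathbf{Y}}(P_\alpha(\mathbf{Y}))\big)$. You in fact supply more detail than the paper at the one nontrivial step: the paper merely asserts this gradient identity, whereas you derive it via the chain rule, the inverse-function relation between $L_{\mathbf{Y}}$ and $R_{\mathbf{Y}}$, and the isometry hypothesis converting the inverse of the transport into its adjoint.
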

\begin{proof}
From \eqref{oqa},
\begin{align}
Q_{\alpha}(\mathbf{X},\mathbf{Y}) := &f(\mathbf{Y}) + \big\langle\mathbf{grad}f(\mathbf{Y}), L_{\mathbf{Y}}(\mathbf{X})\big\rangle +\frac{\alpha}{2} \|L_{\mathbf{Y}}(\mathbf{X})\|_{\mathbf{Y}}^2+g(\mathbf{X})
\end{align}
is a convex function.  From \cite{MishraMeyerBachSepulchre2013}, we obtain that the point $\mathbf{X} = P_\alpha(\mathbf{Y})$ is a local minimizer of $Q_{\alpha}(\mathbf{X},\mathbf{Y})$,  if and only if
there exists $\gamma(\mathbf{Y})\in\partial g(\mathbf{X})$,  such that
\begin{equation*}
\mathrm{grad}\big{(}f(\mathbf{Y}) + \langle\mathrm{grad}f(\mathbf{Y}),L_{\mathbf{Y}}(\mathbf{X})\rangle +\frac{\alpha}{2}\|L_{\mathbf{Y}}(\mathbf{X})\|_{\mathbf Y}^2\big{)}\big{|}_{\mathbf{X} = P_\alpha(\mathbf{Y})}+ \gamma(\mathbf{Y})=\mathbf{0}.
\end{equation*}
 Since
 \begin{equation*}
 \begin{split}
 &\mathrm{grad}\big{(}f(\mathbf{Y}) + \langle\mathrm{grad}f(\mathbf{Y}), L_{\mathbf{Y}}(\mathbf{X})\rangle +\frac{\alpha}{2}\|L_{\mathbf{Y}}(\mathbf{X})\|_{\mathbf{Y}}^2\big{)}\big{|}_{\mathbf{X} = p_\alpha(\mathbf{Y})} \\
 =& \mathcal T_{\mathbf{Y}\rightarrow{P_\alpha(\mathbf{Y})}}(\mathrm{grad} f(\mathbf{Y})+\alpha L_{\mathbf{Y}}(P_{\alpha}(\mathbf{Y}))),
 \end{split}
 \end{equation*}
 the conclusion is right.
\end{proof}

\begin{lemma}\label{lem 5}
 Let $\mathbf{Y}_k\in \mathcal{M}$, $k= 1,2,\cdots$ and $\alpha >0$ be such that
$$F(P_\alpha(\mathbf{Y}_k))\leq Q_{\alpha}(P_{\alpha}(\mathbf{Y}_k),\mathbf{Y}_k), $$
Then for any $\mathbf{X}\in \mathcal{M}$
\begin{equation*}
  F(\mathbf{X}) - F(P_{\alpha}(\mathbf{Y}))\geq \frac{\alpha}{2}\|L_{\mathbf{Y}}(P_{\alpha}(\mathbf{Y}))\|_{\mathbf{Y}}^2 -\alpha\langle L_{\mathbf{Y}}(P_{\alpha}(\mathbf{Y})), L_{\mathbf{Y}}(\mathbf{X})\rangle
\end{equation*}
\end{lemma}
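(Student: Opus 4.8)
The plan is to mirror the Euclidean descent estimate behind accelerated proximal-gradient methods (Beck--Teboulle), replacing Euclidean increments $\mathbf{X}-\mathbf{Y}$ by the lifted tangent vectors $L_{\mathbf{Y}}(\cdot)$ and inserting the isometric vector transport $\mathcal{T}$ wherever two different tangent spaces must be compared. Write $\mathbf{P}:=P_{\alpha}(\mathbf{Y})$ throughout. Since the hypothesis gives $F(\mathbf{P})\le Q_{\alpha}(\mathbf{P},\mathbf{Y})$, it suffices to establish
\begin{equation*}
F(\mathbf{X})-Q_{\alpha}(\mathbf{P},\mathbf{Y})\ \ge\ \frac{\alpha}{2}\|L_{\mathbf{Y}}(\mathbf{P})\|_{\mathbf{Y}}^2-\alpha\big\langle L_{\mathbf{Y}}(\mathbf{P}),L_{\mathbf{Y}}(\mathbf{X})\big\rangle .
\end{equation*}

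First I would lower-bound $F(\mathbf{X})=f(\mathbf{X})+g(\mathbf{X})$ by two first-order inequalities. For the smooth part, convexity of $f$ (assumption \eqref{convexf} specialised to $f$, reading $L_{\mathbf{Y}}(\mathbf{X})$ as the direction from $\mathbf{Y}$ to $\mathbf{X}$) gives $f(\mathbf{X})\ge f(\mathbf{Y})+\langle\mathbf{grad}f(\mathbf{Y}),L_{\mathbf{Y}}(\mathbf{X})\rangle$. For the nonsmooth part, convexity of $g$ at $\mathbf{P}$ together with the subgradient $\gamma(\mathbf{Y})\in\partial g(\mathbf{P})$ supplied by Lemma~\ref{lem 4} gives $g(\mathbf{X})\ge g(\mathbf{P})+\langle\gamma(\mathbf{Y}),\mathcal{T}_{\mathbf{Y}\rightarrow\mathbf{P}}\big(L_{\mathbf{Y}}(\mathbf{X})-L_{\mathbf{Y}}(\mathbf{P})\big)\rangle$, where I have used the (near-points) identity $L_{\mathbf{P}}(\mathbf{X})\approx\mathcal{T}_{\mathbf{Y}\rightarrow\mathbf{P}}\big(L_{\mathbf{Y}}(\mathbf{X})-L_{\mathbf{Y}}(\mathbf{P})\big)$. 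Adding these and subtracting the explicit value $Q_{\alpha}(\mathbf{P},\mathbf{Y})=f(\mathbf{Y})+\langle\mathbf{grad}f(\mathbf{Y}),L_{\mathbf{Y}}(\mathbf{P})\rangle+\frac{\alpha}{2}\|L_{\mathbf{Y}}(\mathbf{P})\|_{\mathbf{Y}}^2+g(\mathbf{P})$ from \eqref{quadraticapproximation} leaves
\begin{equation*}
F(\mathbf{X})-Q_{\alpha}(\mathbf{P},\mathbf{Y})\ \ge\ \big\langle\mathbf{grad}f(\mathbf{Y}),L_{\mathbf{Y}}(\mathbf{X})-L_{\mathbf{Y}}(\mathbf{P})\big\rangle+\big\langle\gamma(\mathbf{Y}),\mathcal{T}_{\mathbf{Y}\rightarrow\mathbf{P}}\big(L_{\mathbf{Y}}(\mathbf{X})-L_{\mathbf{Y}}(\mathbf{P})\big)\big\rangle-\frac{\alpha}{2}\|L_{\mathbf{Y}}(\mathbf{P})\|_{\mathbf{Y}}^2 .
\end{equation*}

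Then I would invoke the optimality characterisation of Lemma~\ref{lem 4}, $\mathcal{T}_{\mathbf{Y}\rightarrow\mathbf{P}}\big(\mathbf{grad}f(\mathbf{Y})+\alpha L_{\mathbf{Y}}(\mathbf{P})\big)+\gamma(\mathbf{Y})=\mathbf{0}$. Because $\mathcal{T}$ is isometric, substituting $\gamma(\mathbf{Y})=-\mathcal{T}_{\mathbf{Y}\rightarrow\mathbf{P}}\big(\mathbf{grad}f(\mathbf{Y})+\alpha L_{\mathbf{Y}}(\mathbf{P})\big)$ and pairing with $\mathcal{T}_{\mathbf{Y}\rightarrow\mathbf{P}}\big(L_{\mathbf{Y}}(\mathbf{X})-L_{\mathbf{Y}}(\mathbf{P})\big)$ reduces the second inner product to $-\langle\mathbf{grad}f(\mathbf{Y})+\alpha L_{\mathbf{Y}}(\mathbf{P}),\,L_{\mathbf{Y}}(\mathbf{X})-L_{\mathbf{Y}}(\mathbf{P})\rangle$; the $\mathbf{grad}f(\mathbf{Y})$ contributions cancel against the first term, and expanding $-\alpha\langle L_{\mathbf{Y}}(\mathbf{P}),L_{\mathbf{Y}}(\mathbf{X})-L_{\mathbf{Y}}(\mathbf{P})\rangle-\frac{\alpha}{2}\|L_{\mathbf{Y}}(\mathbf{P})\|_{\mathbf{Y}}^2=\frac{\alpha}{2}\|L_{\mathbf{Y}}(\mathbf{P})\|_{\mathbf{Y}}^2-\alpha\langle L_{\mathbf{Y}}(\mathbf{P}),L_{\mathbf{Y}}(\mathbf{X})\rangle$ yields precisely the asserted bound.

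The main obstacle is the tangent-space bookkeeping: $\gamma(\mathbf{Y})$ lives in $T_{\mathbf{P}}\mathcal{M}$ whereas $\mathbf{grad}f(\mathbf{Y})$ and all liftings $L_{\mathbf{Y}}(\cdot)$ live in $T_{\mathbf{Y}}\mathcal{M}$, so both the convexity inequality for $g$ and the cancellation step depend on transporting between these spaces and on the approximate triangle identity $L_{\mathbf{P}}(\mathbf{X})\approx\mathcal{T}_{\mathbf{Y}\rightarrow\mathbf{P}}\big(L_{\mathbf{Y}}(\mathbf{X})-L_{\mathbf{Y}}(\mathbf{P})\big)$; justifying these requires working in the regime where the retraction and lifting are diffeomorphic for nearby points and using the isometry of $\mathcal{T}$ recorded at the opening of Appendix~A. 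Once that is granted, the remaining manipulations are the same linear algebra as in the Euclidean case.
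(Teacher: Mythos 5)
Your proposal is correct and follows essentially the same route as the paper's own proof: bound $F(\mathbf{X})-F(P_\alpha(\mathbf{Y}))$ below by $F(\mathbf{X})-Q_\alpha(P_\alpha(\mathbf{Y}),\mathbf{Y})$, apply the convexity inequalities for $f$ at $\mathbf{Y}$ and for $g$ at $P_\alpha(\mathbf{Y})$ with the subgradient $\gamma(\mathbf{Y})$ from Lemma~\ref{lem 4}, substitute the optimality condition via the isometric vector transport, and cancel the $\mathbf{grad}f(\mathbf{Y})$ terms. Your explicit statement of the approximate identity $L_{P_\alpha(\mathbf{Y})}(\mathbf{X})\approx\mathcal{T}_{\mathbf{Y}\rightarrow P_\alpha(\mathbf{Y})}\bigl(L_{\mathbf{Y}}(\mathbf{X})-L_{\mathbf{Y}}(P_\alpha(\mathbf{Y}))\bigr)$ in fact makes the tangent-space bookkeeping clearer than the paper's version, which uses the same step implicitly.
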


\begin{proof}
 From \eqref{searchinequation}, \eqref{convexf},  we have
\begin{equation*}
\begin{split}
  &F(\mathbf{X})-F(P_{\alpha}(\mathbf{Y}))\\
  \geq &F(\mathbf{X})-Q(P_{\alpha}(\mathbf{Y}),\mathbf{Y})\\
  \geq &f(\mathbf{Y}) +\langle \text{grad}f(\mathbf{Y}),L_{\mathbf{Y}}(\mathbf{X}) \rangle+ g(P_\alpha(\mathbf{Y})) +\langle \gamma(\mathbf{Y}),L_{P_\alpha(\mathbf{Y})}(\mathbf{X})\rangle\\
  & -f(\mathbf{Y})-\langle \text{grad}f(\mathbf{Y}), L_{\mathbf{Y}}(P_{\alpha}(\mathbf{Y}))\rangle -\frac{\alpha}{2}\langle L_{\mathbf{Y}}(P_{\alpha}(\mathbf{Y})), L_{\mathbf{Y}}(P_{\alpha}(\mathbf{Y})) \rangle -g(P_\alpha(\mathbf{Y}))\\
  = &\langle \text{grad}f(\mathbf{X}),L_{\mathbf{Y}}(\mathbf{X})-L_{\mathbf{Y}}(P_{\alpha}(\mathbf{Y}))\rangle +\langle L_{P_\alpha(\mathbf{Y})}(\mathbf{Y}),\\
  &\gamma (\mathbf{Y})\rangle-\frac{\alpha}{2}\langle L_{\mathbf{Y}}(P_{\alpha}(\mathbf{Y})), L_{\mathbf{Y}}(P_{\alpha}(\mathbf{Y})) \rangle\\
  =& \langle \text{grad}f(\mathbf{X}), L_{\mathbf{Y}}(\mathbf{X})-L_{\mathbf{Y}}(P_{\alpha}(\mathbf{Y}))\rangle-\langle L_{P_\alpha(\mathbf{Y})}(\mathbf{Y}),T_{\mathbf{Y}\rightarrow{P_\alpha(\mathbf{Y})}}(\text{grad}f(\mathbf{Y})\\
  &+\alpha L_{\mathbf{Y}}(P_{\alpha}(\mathbf{Y})))\rangle -\frac{\alpha}{2}\langle L_{\mathbf{Y}}(P_{\alpha}(\mathbf{Y})),L_{\mathbf{Y}}(P_{\alpha}(\mathbf{Y}))\rangle)\\
  =& \langle \text{grad}f(\mathbf{X}),L_{\mathbf{Y}}(\mathbf{X})-L_{\mathbf{Y}}(P_{\alpha}(\mathbf{Y}))\rangle-\langle L_{\mathbf{Y}}(\mathbf{X})-L_{\mathbf{Y}}(P_{\alpha}(\mathbf{Y}))
  ,\text{grad}f(\mathbf{Y})\\
  &+\alpha L_{\mathbf{Y}}(P_{\alpha}(\mathbf{Y}))\rangle-\frac{\alpha}{2}\langle L_{\mathbf{Y}}(P_{\alpha}(\mathbf{Y})), L_{\mathbf{Y}}(P_{\alpha}(\mathbf{Y})) \rangle)\\
  =&\frac{\alpha}{2}\|L_{\mathbf{Y}}(P_{\alpha}(\mathbf{Y}))\|_{\mathbf{Y}}^2 -\alpha\langle L_{\mathbf{Y}}(P_{\alpha}(\mathbf{Y})), L_{\mathbf{Y}}(\mathbf{X})\rangle
\end{split}
\end{equation*}
\end{proof}
\begin{lemma}\label{lem 6}
The sequences $\{\mathbf{X}_k, \mathbf{Y}_k\}$ generated via Algorithm 1 satisfy for every $k\geq 1$
\begin{equation}
  \frac{2}{\alpha_k}t_k^2v_k -\frac{2}{\alpha_k+1}t_{k+1}^2v_{k+1}\leq \|\mathbf{U}_{k+1}\|_{\mathbf{X}_{k+1}}^2-\|\mathbf{U}_k\|_{\mathbf{X}_k}^2
\end{equation}
where
\begin{equation*}
\begin{split}
& v_k: = F(X_k)-F(X_*)\\
&\mathbf{U}_{k+1}: = \|-(t_{k+1}-1)L_{\mathbf{X}_{k+1}}(\mathbf{X}_{k})- L_{\mathbf{X}_{k+1}}(\mathbf{X}_*)\|_{\mathbf{X}_{k+1}}^2.
\end{split}
\end{equation*}
\end{lemma}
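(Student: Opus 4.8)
The plan is to transplant the classical Beck--Teboulle FISTA estimate onto $\mathcal{M}$, using Lemma~\ref{lem 5} in place of the Euclidean descent inequality and the isometric vector transport $\mathcal{T}_s$ to pass between tangent spaces. Fix $k\geq 1$, write $\alpha=\alpha_{k+1}$ and $\mathbf{Y}=\mathbf{Y}_{k+1}$, so that by construction $P_{\alpha}(\mathbf{Y})=\mathbf{X}_{k+1}$ and the backtracking inequality \eqref{searchinequation} holds at step $k+1$. I would invoke Lemma~\ref{lem 5} twice, once with $\mathbf{X}=\mathbf{X}_k$, giving
\begin{equation*}
v_k-v_{k+1}\ \geq\ \frac{\alpha}{2}\|L_{\mathbf{Y}}(\mathbf{X}_{k+1})\|_{\mathbf{Y}}^2-\alpha\langle L_{\mathbf{Y}}(\mathbf{X}_{k+1}),L_{\mathbf{Y}}(\mathbf{X}_k)\rangle ,
\end{equation*}
and once with $\mathbf{X}=\mathbf{X}_*$, giving
\begin{equation*}
-v_{k+1}\ \geq\ \frac{\alpha}{2}\|L_{\mathbf{Y}}(\mathbf{X}_{k+1})\|_{\mathbf{Y}}^2-\alpha\langle L_{\mathbf{Y}}(\mathbf{X}_{k+1}),L_{\mathbf{Y}}(\mathbf{X}_*)\rangle .
\end{equation*}

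Next I would take the combination $(t_{k+1}-1)\times(\text{first})+1\times(\text{second})$ — legitimate since $t_{k+1}\geq 1$ by Lemma~\ref{lem 3} — and scale through by $2t_{k+1}/\alpha$. Squaring the recursion $t_{k+1}=(1+\sqrt{1+4t_k^2})/2$ gives $t_{k+1}^2-t_{k+1}=t_k^2$, so the left-hand side collapses exactly to $\frac{2}{\alpha_{k+1}}(t_k^2v_k-t_{k+1}^2v_{k+1})$, while the right-hand side becomes $\|\mathbf{a}\|_{\mathbf{Y}}^2+2\langle\mathbf{a},\mathbf{b}\rangle_{\mathbf{Y}}$ with
\begin{equation*}
\mathbf{a}:=t_{k+1}L_{\mathbf{Y}}(\mathbf{X}_{k+1}),\qquad \mathbf{b}:=-(t_{k+1}-1)L_{\mathbf{Y}}(\mathbf{X}_k)-L_{\mathbf{Y}}(\mathbf{X}_*)\ \in\ T_{\mathbf{Y}}\mathcal{M}.
\end{equation*}
Completing the square via $\|\mathbf{a}\|^2+2\langle\mathbf{a},\mathbf{b}\rangle=\|\mathbf{a}+\mathbf{b}\|^2-\|\mathbf{b}\|^2$ then yields $\frac{2}{\alpha_{k+1}}(t_k^2v_k-t_{k+1}^2v_{k+1})\geq\|\mathbf{a}+\mathbf{b}\|_{\mathbf{Y}}^2-\|\mathbf{b}\|_{\mathbf{Y}}^2$.

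It then remains to show $\|\mathbf{a}+\mathbf{b}\|_{\mathbf{Y}}=\|\mathbf{U}_{k+1}\|_{\mathbf{X}_{k+1}}$ and $\|\mathbf{b}\|_{\mathbf{Y}}=\|\mathbf{U}_k\|_{\mathbf{X}_k}$. Following the same convention used in the proof of Lemma~\ref{lem 5} (treating $L_{\mathbf{Z}_1}(\mathbf{Z}_2)$ like $\mathbf{Z}_2-\mathbf{Z}_1$ for nearby points, with $R$ and $L$ mutual inverses to that order), the base point $\mathbf{Y}_{k+1}$ cancels in $\mathbf{a}+\mathbf{b}$, leaving the base-point-free expression ``$t_{k+1}\mathbf{X}_{k+1}-(t_{k+1}-1)\mathbf{X}_k-\mathbf{X}_*$'', which is also the expression of $\mathbf{U}_{k+1}=-(t_{k+1}-1)L_{\mathbf{X}_{k+1}}(\mathbf{X}_k)-L_{\mathbf{X}_{k+1}}(\mathbf{X}_*)$. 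Similarly, substituting $\mathbf{Y}_{k+1}=R_{\mathbf{X}_k}\!\big(-\frac{t_k-1}{t_{k+1}}L_{\mathbf{X}_k}(\mathbf{X}_{k-1})\big)$ reduces $\mathbf{b}$ to ``$t_k\mathbf{X}_k-(t_k-1)\mathbf{X}_{k-1}-\mathbf{X}_*$'', the expression of $\mathbf{U}_k=-(t_k-1)L_{\mathbf{X}_k}(\mathbf{X}_{k-1})-L_{\mathbf{X}_k}(\mathbf{X}_*)$. Since $\mathcal{T}_s$ is isometric, vectors with the same such expression at different base points have the same norm, and the claimed inequality follows.

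I expect this last identification to be the only genuinely delicate point: $\mathbf{a}$, $\mathbf{b}$, $\mathbf{a}+\mathbf{b}$ naturally live in $T_{\mathbf{Y}_{k+1}}\mathcal{M}$, whereas $\mathbf{U}_{k+1}$ and $\mathbf{U}_k$ are defined in $T_{\mathbf{X}_{k+1}}\mathcal{M}$ and $T_{\mathbf{X}_k}\mathcal{M}$, so matching their lengths rests on the ``nearby points'' linearization of $L$ and $R$ used throughout Section~\ref{Sec:III} together with the isometry of the vector transport recalled at the start of Appendix~A. One should also verify that $R_{\mathbf{X}_k}$ and $L_{\mathbf{X}_k}$ compose to the identity to the order being used, so that the substitution for $\mathbf{Y}_{k+1}$ is consistent; everything else is the routine algebra of the Euclidean FISTA estimate.
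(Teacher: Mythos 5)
Your proposal reproduces the paper's own argument essentially verbatim: the same two applications of Lemma~\ref{lem 5} at $(\mathbf{X}_k,\mathbf{Y}_{k+1})$ and $(\mathbf{X}_*,\mathbf{Y}_{k+1})$, the same $(t_{k+1}-1)$-weighted combination scaled by $t_{k+1}$ with $t_k^2=t_{k+1}^2-t_{k+1}$, the same completion of the square, and the same final identification of $\mathbf{U}_{k+1}$ and $\mathbf{U}_k$ via the isometric vector transport and the relation $L_{\mathbf{X}_k}(\mathbf{Y}_{k+1})=-\frac{t_k-1}{t_{k+1}}L_{\mathbf{X}_k}(\mathbf{X}_{k-1})$. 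The delicate base-point-matching step you flag at the end is indeed the one the paper also handles only informally, so your account is faithful to (and slightly more explicit than) the original.
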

\begin{proof}
  Frist we apply Lemma \eqref{lem 5} at the points $\mathbf{X}:=\mathbf{X}_k, \mathbf{Y}: =\mathbf{Y}_{k+1}$ with $\alpha = \alpha_{k+1}$, and likewise at the points $\mathbf{X}:=\mathbf{X}_*, \mathbf{Y}: =\mathbf{Y}_{k+1}$ to get
\begin{equation*}
\begin{split}
 & 2\alpha_{k+1}^{-1}(v_k-v_{k+1})\\
 \geq &\|L_{\mathbf{Y}}(P_{\alpha}(\mathbf{Y}))\|_{\mathbf{Y}}^2- 2\langle L_{\mathbf{Y}_{k+1}}(\mathbf{X}_{k+1}),L_{\mathbf{Y}_{k+1}}(\mathbf{X}_k)\rangle-2\alpha_{k+1}^{-1}v_{k+1}\\
 \geq & \|L_{\mathbf{Y}}(P_{\alpha}(\mathbf{Y}))\|_{\mathbf{Y}}^2- 2\langle L_{\mathbf{Y}_{k+1}}(\mathbf{X}_{k+1}), L_{\mathbf{Y}_{k+1}}(\mathbf{X}_*)\rangle
\end{split}
\end{equation*}
where we used the fact that $\mathbf{X}_{k+1}=P_{\alpha_{k+1}}(\mathbf{Y}_{k+1})$. Now we multiply the first inequality above by $t_{k+1}-1$ and add it to the second inequality:
\begin{equation*}
\begin{split}
&\frac{2}{\alpha_{k+1}}((t_{k+1}-1)v_k-t_{k+1}v_{k+1})\\
\geq &  t_{k+1}\langle L_{\mathbf{Y}_{k+1}}(\mathbf{X}_{k+1}), L_{\mathbf{Y}_{k+1}}(\mathbf{X}_{k+1})\rangle- 2\langle L_{\mathbf{Y}_{k+1}}(\mathbf{X}_{k+1}), (t_{k+1}-1)L_{\mathbf{Y}_{k+1}}(\mathbf{X}_k)\\
&+L_{\mathbf{Y}_{k+1}}(\mathbf{X}_*)\rangle
\end{split}
\end{equation*}
Multiplying the last inequality by $t_{k+1}$ and using the relation $t_k^2 = t_{k+1}^2-t_{k+1}$, we obtain
\begin{equation*}
\begin{split}
&\frac{2}{\alpha_{k+1}}(t_{k}^2v_k-t_{k+1}^2v_{k+1})\\
\geq &  t_{k+1}^2\langle L_{\mathbf{Y}_{k+1}}(\mathbf{X}_{k+1}), L_{\mathbf{Y}_{k+1}}(\mathbf{X}_{k+1})\rangle - 2\langle t_{k+1}L_{\mathbf{Y}_{k+1}}(\mathbf{X}_{k+1}),(t_{k+1}-1)L_{\mathbf{Y}_{k+1}}(\mathbf{X}_k)\\
&+L_{\mathbf{Y}_{k+1}}(\mathbf{X}_*)\rangle\\
 =& \|t_{k+1}L_{\mathbf{Y}_{k+1}}(\mathbf{X}_{k+1})-(t_{k+1}-1)L_{\mathbf{Y}_{k+1}}(\mathbf{X}_k)-L_{\mathbf{Y}_{k+1}}(\mathbf{X}_*)\|_{\mathbf{Y}_{k+1}}^2\\
 &-\| -(t_{k+1}-1)L_{\mathbf{Y}_{k+1}}(\mathbf{X}_k)-L_{\mathbf{Y}_{k+1}}(\mathbf{X}_*)\|_{\mathbf{Y}_{k+1}}^2
\end{split}
\end{equation*}
Here the vector transport $\mathcal{T}_s$ is isometric means, Therefore,
\begin{equation*}
\begin{split}
 \mathbf{U}_{k+1} &=\|t_{k+1}  L_{\mathbf{Y}_{k+1}}(\mathbf{X}_{k+1})-(t_{k+1}-1)L_{\mathbf{Y}_{k+1}}(\mathbf{X}_k)-L_{\mathbf{Y}_{k+1}}(\mathbf{X}_*)\|_{\mathbf{X}_{k+1}}^2\\
 &=\|-(t_{k+1}-1)L_{\mathbf{X}_{k+1}}(\mathbf{X}_{k})- L_{\mathbf{X}_{k+1}}(\mathbf{X}_*)\|_{\mathbf{X}_{k+1}}^2
 \end{split}
 \end{equation*}
  with $L_{\mathbf{X}_{k}}(\mathbf{Y}_{k+1}) = -((t_k-1)/t_{k+1})L_{\mathbf{X}_k}(\mathbf{X}_{k-1})$,
 \begin{equation*}
 \begin{split}
 \mathbf{U}_{k} &= \| -(t_{k+1}-1)L_{\mathbf{Y}_{k+1}}(\mathbf{X}_k)-L_{\mathbf{Y}_{k+1}}(\mathbf{X}_*)\|_{\mathbf{X}_{k+1}}^2\\
 & = \|(t_{k+1}-1)L_{\mathbf{X}_{k}}(\mathbf{Y}_{k+1})+L_{\mathbf{X}_k}(\mathbf{Y}_{k+1}) -L_{\mathbf{X}_{k}}(\mathbf{X}_*)\|_{\mathbf{X}_k}^2\\
 & = \|-(t_k-1)L_{\mathbf{X}_k}(\mathbf{X}_{k-1})-L_{\mathbf{X}_k}(\mathbf{X}_*)\|_{\mathbf{X}_k}^2
 \end{split}
 \end{equation*}
and with $\alpha_{k+1}\leq \alpha_k$, yields
\begin{equation*}
  \frac{2}{\alpha_{k}}t_{k}^2v_k-\frac{2}{\alpha_{k}}t_{k+1}^2v_{k+1}\geq \mathbf{U}_{k+1}-\mathbf{U}_{k}.
\end{equation*}
\end{proof}
Now we prove the promised improved complexity result of Theorem \ref{thm 1}.

\begin{proof}
Define the quantities
$$a_k: = \frac{2}{\alpha_{k}}t_{k}^2v_k, b_k:=\mathbf{U}_k, c: = \|L_{\mathbf{X}_0}(\mathbf{X}_*)\|_{\mathbf{X}_0}^2 = \|L_{\mathbf{Y}_1}(\mathbf{X}_*)\|_{\mathbf{X}_0}^2 $$
and recall Lemma \eqref{lem 2} that $v_k=F(\mathbf{X}_k)-F(\mathbf{X}_*)$. Then,
$$a_k-a_{k+1}\geq b_{k+1}-b_k$$
and $t_1 = 1$, applying Lemma \eqref{lem 5} to the points $\mathbf{X}:= \mathbf{X}_*, Y:= Y_1$ with $\alpha=\alpha_1$, we get
\begin{equation*}
\begin{split}
 & F(\mathbf{X}_*)-F(\mathbf{p(\mathbf{Y}}_1))\\
 &\geq \frac{\alpha_1}{2}\langle L_{\mathbf{Y}_1}(P_{\alpha}(\mathbf{Y}_1)), L_{\mathbf{Y}_1}(P_{\alpha}(\mathbf{Y}_1))\rangle-\alpha_1 \langle L_{\mathbf{Y}_1}(\mathbf{X}_*),L_{\mathbf{Y}_1}(P_{\alpha}(\mathbf{Y}_1))\rangle
  \end{split}
\end{equation*}
Then
\begin{equation*}
  \begin{split}
    a_1+b_1
    &= \frac{2}{\alpha_1}v_1 + \|L_{\mathbf{X}_1}(\mathbf{X}_*)\|_{\mathbf{X}_1}^2\\
    &= \frac{2}{\alpha_1}(F(\mathbf{X}_1)-F(\mathbf{X}_*))+\|L_{\mathbf{X}_1}(\mathbf{X}_*)\|_{\mathbf{X}_1}^2\\
    &= -\frac{2}{\alpha_1}(F(\mathbf{X}_*)-F(P_\alpha(\mathbf{Y}_1))+\|L_{\mathbf{X}_1}(\mathbf{X}_*)\|_{\mathbf{X}_1}^2\\
    &\overset{(17)}{\leq} -(\|L_{\mathbf{Y}_1}(P_{\alpha}(\mathbf{Y}_1))\|_{\mathbf{Y}_1}^2-2 \langle¡¡L_{\mathbf{Y}_1}(\mathbf{X}_*), L_{\mathbf{Y}_1}(P_{\alpha}(\mathbf{Y}_1)) \rangle) +\|L_{\mathbf{X}_1}(\mathbf{X}_*)\|_{\mathbf{X}_1}^2\\
    &= -\|L_{\mathbf{Y}_1}(\mathbf{X}_1)-L_{\mathbf{Y}_1}(\mathbf{X}_*)\|_{\mathbf{Y}_1}^2+\|L_{\mathbf{Y}_1}(\mathbf{X}_*)\|_{\mathbf{Y}_1}^2+\|L_{\mathbf{X}_1}(\mathbf{X}_*)\|_{\mathbf{X}_1}^2\\
    & = \|L_{\mathbf{Y}_1}(\mathbf{X}_*)\|_{\mathbf{Y}_1}^2\\
    &=c.
  \end{split}
\end{equation*}
 By Lemma \eqref{lem 2} we have for every $k\geq 1$,
\begin{equation*}
  a_k = \frac{2}{\alpha_{k}}t_{k}^2v_k \leq \|L_{\mathbf{X}_0}(\mathbf{X}_*)\|_{\mathbf{X}_0}^2
\end{equation*}
Invoking Lemma \eqref{lem 3}, $ t_k\geq (k+1)/2, \alpha_k\leq \eta L(f) $,  we obtain that
\begin{equation*}
  F(\mathbf{X}_k)-F(\mathbf{X}_*) \leq \frac{2\eta L(f)\|L_{\mathbf{X}_0}(\mathbf{X}_*)\|_{\mathbf{X}_0}^2}{(k+1)^2}
\end{equation*}
\end{proof}
\end{document}